\newtheorem{theorem}{Theorem}[section]
\newtheorem*{theoremA}{Theorem A}
\newtheorem*{theoremB}{Theorem B}
\newtheorem*{theoremC}{Theorem C}
\newtheorem{fact}[theorem]{Fact}
\newtheorem{corollary}[theorem]{Corollary}
\newtheorem{lemma}[theorem]{Lemma}
\newtheorem{proposition}[theorem]{Proposition}
\newtheorem{question}[theorem]{Question}
\theoremstyle{definition}
\newtheorem{definition}[theorem]{Definition}
\theoremstyle{remark}
\newtheorem{example}[theorem]{Example}
\newcommand{\ben}{\begin{enumerate}}
\newcommand{\een}{\end{enumerate}}
\newcommand{\bit}{\begin{itemize}}
\newcommand{\eit}{\end{itemize}}
\def\R {{\mathbb R}}
\def\Q {{\mathbb Q}}
\def\N{{\mathbb N}}
\def\T{{\mathbb T}}
\def\Z {{\mathbb Z}}
\def\H{{\mathcal H}}
\def\QED{\nobreak\quad\ifmmode\roman{Q.E.D.}\else{\rm Q.E.D.}\fi}
\def\HLM{hereditarily  locally minimal}
\def\H(L)M{hereditarily  (locally) minimal}
\def\D(L)M{densely (locally) minimal}
\def \HM {hereditarily   minimal}
\def\DLM{densely locally minimal}
\def\DM{densely minimal}
\def\DTM{densely totally minimal}
\def \HM {hereditarily   minimal}
\DeclareMathOperator{\SO}{SO}
\DeclareMathOperator{\SL}{SL}
\begin{document}

\title{Densely locally minimal groups}
\author[Xi]{W. Xi}
\address[W. Xi]{\hfill\break
	School of Mathematical Sciences
	\hfill\break
	Nanjing Normal University
	\hfill\break
	Wenyuan Road No. 1, 210046 Nanjing
	\hfill\break
	China}
\email{xiwenfei0418@outlook.com}

\author[Dikranjan]{D. Dikranjan}
\address[D. Dikranjan]{\hfill\break
Dipartimento di Matematica e Informatica
\hfill\break
Universit\`{a} di Udine
\hfill\break
Via delle Scienze  206, 33100 Udine
\hfill\break
Italy}
\email{dikranja@dimi.uniud.it}
\author[Shlossberg]{M. Shlossberg}
\address[M. Shlossberg]{\hfill\break
Dipartimento di Matematica e Informatica
\hfill\break
Universit\`{a} di Udine
\hfill\break
Via delle Scienze  206, 33100 Udine
\hfill\break
Italy}
\email{menachem.shlossberg@uniud.it}
\author[Toller]{D. Toller}
\address[D. Toller]{\hfill\break
Dipartimento di Matematica e Informatica
\hfill\break
Universit\`{a} di Udine
\hfill\break
Via delle Scienze  206, 33100 Udine
\hfill\break
Italy}
\email{daniele.toller@uniud.it}

\keywords{hereditarily (locally) minimal, locally minimal group, Lie group, $p$-adic integer, non-topologizable group, Hilbert-Smith Conjecture}

\subjclass[2010]{22A05, 22B05, 22D05, 22D35, 43A70, 54H11}

\begin{abstract}
We study locally compact groups having all dense subgroups (locally) minimal. We call such groups {\it densely} {\it (locally) minimal}. In 1972 Prodanov proved that the infinite compact abelian groups having all subgroups minimal are precisely the groups $\Z_p$ of $p$-adic integers. In \cite{firstpaper}, we extended Prodanov's theorem to the non-abelian case at several levels. In this paper, we focus on the densely (locally) minimal abelian groups.

We prove that in case that a topological abelian group $G$ is either compact or connected locally compact, then $G$ is densely locally minimal if and only if $G$ either is a Lie group or has an open subgroup isomorphic to $\Z_p$ for some prime $p$. This should be compared with the main result of \cite{DHXX}. Our Theorem C provides another extension of Prodanov's theorem: an infinite locally compact group is densely minimal if and only if it is isomorphic to $\Z_p$. In contrast, we show that there exists a densely minimal, compact, two-step nilpotent group that neither is a Lie group nor it has an open subgroup isomorphic to $\Z_p$.
\end{abstract}

\maketitle

\begin{center}
\today
\end{center}

\section{Introduction}
A Hausdorff group $(G, \tau)$ is called {\it minimal} if there exists no Hausdorff group topology on $G$ which is strictly coarser than $\tau$. Equivalently,  if every continuous isomorphism $G\longrightarrow H$, with a Hausdorff group $H$, is a topological isomorphism.
Introducing independently this notion,  Do\"{\i}tchinov and  Stephenson \cite{Doitch,S71} also provided the first examples of minimal groups which are not compact. For more information on minimal groups we refer the reader to  \cite{B,DS,EDS,P} (see also the survey \cite{DMe} and the book \cite{DPS}).
Note that locally compact groups need not be minimal. Actually, Stephenson proved the following.
\begin{fact}\label{fact:step}
\cite[Theorem 1]{S71} \label{Steph:Thm} A minimal locally compact abelian group is compact.
\end{fact}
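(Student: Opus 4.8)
The plan is to produce, on any locally compact abelian $(G,\tau)$, a canonical Hausdorff group topology that is coarser than $\tau$ and \emph{a priori} precompact; minimality will then force this topology to coincide with $\tau$, and precompactness together with completeness of $G$ will yield compactness. Concretely, I would work with the Bohr topology $\tau_b$ on $G$, that is, the initial topology induced by the family of all $\tau$-continuous characters $\chi\colon G\to\T$. Since every such $\chi$ is $\tau$-continuous, $\tau_b\le\tau$; and being an initial topology with respect to homomorphisms into topological groups, $\tau_b$ is itself a (Hausdorff-or-not) group topology.

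The two properties of $\tau_b$ that drive the argument are: (i) it is precompact, and (ii) for $G$ locally compact abelian it is Hausdorff. For (i), note that $\tau_b$ is the initial topology induced by the single continuous homomorphism $e\colon G\to\T^{\widehat G}$, $x\mapsto(\chi(x))_{\chi\in\widehat G}$, into the compact group $\T^{\widehat G}$; an initial topology with respect to a map into a totally bounded group is totally bounded, so $\tau_b$ is precompact. For (ii), I would invoke Pontryagin duality: the continuous characters of a locally compact abelian group separate its points, so $e$ is injective and $\tau_b$ is Hausdorff.

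With these in hand the argument closes quickly. Since $\tau_b$ is a Hausdorff group topology with $\tau_b\le\tau$, minimality of $(G,\tau)$ rules out $\tau_b<\tau$, forcing $\tau_b=\tau$. Thus $\tau$ itself is precompact, so the completion of $G$ is compact; but a locally compact group is complete, whence $G$ equals its own completion and is therefore compact. I expect the main difficulty — or rather the essential imported input — to be the Hausdorffness of $\tau_b$, which rests on the fact that locally compact abelian groups admit enough continuous characters to separate points (Pontryagin duality); everything else is formal. Note that this route sidesteps the structure theory of locally compact abelian groups entirely: no splitting $G\cong\R^n\times G_0$ is needed. As a sanity check one can verify the mechanism on $\R$, whose Bohr topology is the precompact, non-complete topology induced by the characters $x\mapsto e^{i\lambda x}$ and is strictly coarser than the Euclidean one, exhibiting $\R$ as non-minimal.
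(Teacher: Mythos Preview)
Your argument is correct. The paper itself does not supply a proof of this Fact: it is quoted verbatim from Stephenson's original article and used as an imported result, so there is no ``paper's own proof'' to compare against. For the record, the route you take --- pass to the Bohr topology $\tau_b$, observe that $\tau_b$ is a precompact Hausdorff group topology coarser than $\tau$ (Hausdorffness coming from the fact that continuous characters separate points on an LCA group), invoke minimality to get $\tau=\tau_b$, and conclude compactness from precompactness plus the completeness of locally compact groups --- is the standard one and is essentially Stephenson's original argument. Each step is sound; in particular your appeal to Pontryagin duality for the separation of points is the only genuinely nontrivial input, and it is exactly what is needed.

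One small remark: you might add a word on why ``locally compact $\Rightarrow$ complete'' holds here (for abelian groups the left, right, and two-sided uniformities coincide, and a locally compact group is complete in its two-sided uniformity), just to make the last step airtight.
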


Morris and Pestov \cite{MP} introduced the following class of groups which contains all minimal groups and all locally compact groups. A Hausdorff  group $(G, \tau)$ is called {\it locally minimal}  if there exists a neighborhood $V$ of the identity of $G$, such that for every coarser Hausdorff group topology $\sigma\subseteq \tau$ with $V\in \sigma$ one has $\sigma=\tau$.

Neither minimality nor local minimality is inherited by all subgroups.
For this reason, we studied in \cite{firstpaper} the locally compact groups having all subgroups (locally) minimal, introducing the following terminology.
\begin{definition}\cite[Definition 1.2]{firstpaper}\label{hmhlm}
A topological group $G$ is said to be {\em hereditarily} {\em (locally)}  {\em minimal}, if every subgroup of $G$ is  (locally) minimal.
\end{definition}

 Prodanov proved that the $p$-adic integers are the only infinite \HM\ compact groups, namely:
\begin{fact}\label{TeoP}\cite{P}\label{fac:HMZ}
An infinite compact abelian group $K$ is isomorphic to $\Z_p$ for some prime $p$ if and only if  $K$ is \HM.
\end{fact}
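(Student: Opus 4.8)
The plan is to reformulate hereditary minimality through the \emph{Prodanov--Stephenson criterion}: a precompact abelian group is minimal if and only if it is \emph{essential} in its completion, i.e. it meets every nontrivial closed subgroup of the completion nontrivially. Since every subgroup $H$ of a compact abelian group $K$ is precompact with completion $\overline{H}$, this says that $K$ is \HM\ precisely when every subgroup $H\le K$ is essential in $\overline{H}$. Both implications then become statements about this essentiality condition, and the substantive content is the reverse implication.

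For the direction $K\cong\Z_p\Rightarrow K$ is \HM, I would exploit that the closed subgroups of $\Z_p$ form a single descending chain $\Z_p\supset p\Z_p\supset p^2\Z_p\supset\cdots$ with trivial intersection. Given a nontrivial subgroup $H$, let $k$ be the least $p$-adic valuation attained on $H\setminus\{0\}$; then $\overline{H}=p^k\Z_p$, and picking $x_0\in H$ with valuation $k$ one has $p^m x_0\in H$ of valuation $k+m$ for every $m$, so $H$ meets every member of the chain that is contained in $\overline{H}$. Thus $H$ is essential in $\overline{H}$ and hence minimal, proving that $\Z_p$ is \HM.

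For the converse I would pass to the Pontryagin dual $G=\widehat{K}$ and use the structure theory of compact abelian groups, the goal being to force $G\cong\Z(p^\infty)$ (equivalently $K\cong\Z_p$). The engine is the construction of explicit dense non-essential subgroups in every ``forbidden'' $K$. A \emph{monothetic} dense cyclic subgroup $\langle g\rangle\cong\Z$ generated by a topological generator is one-dimensional and therefore misses a transverse nontrivial closed subgroup as soon as $K$ is not ``chain-structured of rank one'': in $\Z_p\times\Z_q$ with $p\ne q$ the generator $(1,1)$ meets $\Z_p\times\{0\}$ only in $0$, and in the circle a dense $\Z$ misses the finite subgroup of order $p$. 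A complementary counting argument handles large socles: groups such as $\Z_p^2$ or $\prod_n\Z(p^n)$ carry continuum-many pairwise trivially intersecting minimal closed subgroups, whereas a separable (e.g. countable) dense subgroup meets only countably many of them, so it cannot be essential. These arguments successively rule out more than one prime, rank exceeding one, and infinite torsion; together with the reduction to the totally disconnected case (a nontrivial connected compact abelian group admits a circle or solenoid, carrying a dense $\Z$ that misses a finite subgroup $\Z(p)$, hence fails to be \HM), one forces $G$ to be a torsion, divisible $p$-group of rank one, which is exactly $\Z(p^\infty)$.

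The main obstacle is the reverse direction, and within it the bookkeeping that reduces an \emph{arbitrary} infinite compact abelian group to the handful of cases above via duality and the structure theorem, together with producing, in each forbidden case, a concrete dense subgroup that is provably not essential. I expect the connected case (controlling solenoids and their torsion closed subgroups) and the infinite-torsion case (the cardinality comparison between the density character and the number of pairwise disjoint minimal closed subgroups) to be the most delicate points; once every forbidden compact abelian group is eliminated, the identification $K\cong\Z_p$ is immediate.
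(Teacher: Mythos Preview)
The paper does not prove this statement at all: it is stated as a \emph{Fact} with the citation \cite{P} and is used throughout as an input, not as something the paper establishes. So there is no ``paper's own proof'' to compare your proposal against.

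That said, your outline is in the right spirit. The forward direction is correct as written: the closed nontrivial subgroups of $\Z_p$ form the single chain $p^k\Z_p$, and any nontrivial $H$ meets every term below $\overline{H}$. For the converse, the strategy of passing to $\widehat{K}$ and eliminating (i) a nontrivial connected component, (ii) more than one prime, (iii) $p$-rank $\ge 2$, and (iv) infinite torsion by exhibiting in each case a dense subgroup that fails essentiality is exactly the classical line of argument. Your concrete witnesses are fine: the dense cyclic $\langle(1,1)\rangle$ in $\Z_p\times\Z_q$ misses $\Z_p\times\{0\}$; a dense $\langle\theta\rangle\cong\Z$ in $\T$ misses every finite subgroup; and in $\Z_p^2$ the continuum of pairwise transverse copies of $\Z_p$ defeats any countable dense subgroup.

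What you have is a plan, not a proof: you yourself flag the reduction bookkeeping (getting from an arbitrary infinite compact abelian $K$ down to these test cases via the structure theorem and duality) as the main obstacle, and you do not carry it out. In particular you would still need to argue carefully that a non-totally-disconnected $K$ admits a quotient or closed subgroup in which your circle argument applies and that this lifts back to a non-minimal subgroup of $K$; and in the totally disconnected case you need to pin down that $\widehat{K}$ torsion, $p$-primary, divisible, and of rank one really are forced step by step. None of this is wrong, but it is not yet done.
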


In \cite{firstpaper} we extended this result to non-abelian groups at various levels of non-commutativity (in particular, see the solvable case \cite[Theorem D]{firstpaper}). As a starting point of our current research consider the following extension of Prodanov's theorem which unifies Lie theory and $p$-adic numbers ``under the same umbrella''.

\begin{fact}{\cite[Corollary 1.11]{DHXX}}\label{fac:ext}
For a locally compact  group $K$ that is either abelian or connected the following conditions are equivalent:
\ben[(a)]
\item $K$ is a \HLM \ group;
\item $K$ either is a Lie group or has an open subgroup isomorphic to $\Z_p$ for some prime $p.$\een
\end{fact}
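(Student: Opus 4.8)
Since the statement is an equivalence I treat the two implications separately, and I expect all the difficulty to sit in (a)$\Rightarrow$(b).

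\textbf{The implication (b)$\Rightarrow$(a).} The plan is to check that each alternative in (b) already forces every subgroup to be locally minimal. If $K$ is a Lie group, then $K$ has no small subgroups (the NSS property): some neighborhood of the identity contains no nontrivial subgroup. This property is inherited by every subgroup $L\le K$ in the subspace topology, and a group with no small subgroups is locally minimal (Morris--Pestov), so each such $L$ is locally minimal. If instead $K$ has an open subgroup $H\cong\Z_p$, then $H$ is \HM\ by Prodanov's theorem (Fact~\ref{TeoP}), hence \HLM. For an arbitrary $L\le K$ the intersection $L\cap H$ is open in $L$ and is a subgroup of $\Z_p$, so it is locally minimal; since local minimality transfers from an open subgroup to the whole group, $L$ is locally minimal and $K$ is \HLM.

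\textbf{The implication (a)$\Rightarrow$(b): setup and obstruction.} Assume $K$ is \HLM. By the solution of Hilbert's fifth problem (Gleason--Montgomery--Zippin--Yamabe), a locally compact group is a Lie group exactly when it has no small subgroups, so if $K$ is NSS we are already in case (b). I therefore assume $K$ has arbitrarily small nontrivial compact subgroups; on a suitable open subgroup $K$ is then an inverse limit $\varprojlim K/N$ of Lie quotients with $N$ compact normal, and the aim is to manufacture an open copy of $\Z_p$. The driving tool is a Prodanov-type obstruction underlying Fact~\ref{TeoP}: a compact group containing a closed subgroup topologically isomorphic to an infinite product $\prod_{n}F_n$ of nontrivial finite groups admits a dense subgroup that fails to be locally minimal, and hence cannot be \HLM.

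\textbf{The implication (a)$\Rightarrow$(b): the two cases.} If $K$ is \emph{connected}, then it has no proper open subgroup, so (b) can only assert that $K$ is a Lie group; equivalently I must exclude small subgroups. Were $K$ not NSS, the pro-Lie presentation $K=\varprojlim K/N$ would produce a strictly decreasing chain of compact normal subgroups whose successive subquotients assemble into a closed copy of an infinite product $\prod_n F_n$, contradicting the obstruction; hence connected \HLM\ groups are Lie. If $K$ is \emph{abelian}, write $K\cong\R^{n}\times H$ with $H$ possessing a compact open subgroup $C$. The same obstruction first forces the connected component of $K$ to be trivial, since a nontrivial connected part cannot coexist with small subgroups inside an \HLM\ group; thus $K$ is totally disconnected and $C$ is profinite. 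A profinite abelian \HLM\ group cannot contain an infinite product of finite cyclic groups, and this pins $C$ down to having an open subgroup topologically isomorphic to $\Z_p$ for a single prime $p$. As $C$ is open in $K$, so is this copy of $\Z_p$, completing (b).

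\textbf{Main obstacle.} The delicate point is the obstruction itself: from the small-subgroup data one must extract an explicit dense subgroup whose topology can be strictly coarsened while a prescribed neighborhood stays open, and one must organize the inverse-limit data so that a product $\prod_n F_n$ genuinely embeds as a \emph{closed} subgroup rather than merely appearing as a subquotient. Excluding the mixed configuration, a nontrivial connected part together with small subgroups, in the abelian case is of comparable difficulty. By contrast, the transfer of local minimality across open subgroups and the implication ``no small subgroups $\Rightarrow$ locally minimal'' are routine.
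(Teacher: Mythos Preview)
The paper does not prove this statement at all. Fact~\ref{fac:ext} is quoted verbatim from \cite[Corollary 1.11]{DHXX} and used as a black box throughout (in the proof of Theorem~A, in Example~\ref{ex:ac}, in Proposition~\ref{prop:hscon}, etc.); no argument for it appears anywhere in the present paper. There is therefore nothing here to compare your proposal against.

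On your sketch taken on its own: the direction (b)$\Rightarrow$(a) is handled correctly, and your global plan for (a)$\Rightarrow$(b) --- dispose of the NSS case via Gleason--Montgomery--Zippin, then in the non-NSS case use a ``product-of-finite-groups'' obstruction to force either the Lie conclusion (connected case) or total disconnectedness followed by a $\Z_p$ identification (abelian case) --- is a reasonable architecture. However, as you yourself concede in the ``Main obstacle'' paragraph, the load-bearing steps are not carried out. In the connected case you assert that a strictly decreasing chain $N_1\supset N_2\supset\cdots$ of compact normals with finite subquotients $F_n=N_n/N_{n+1}$ yields a \emph{closed} copy of $\prod_n F_n$; this does not follow without a splitting argument, and in a connected group the $N_i$ need not even be totally disconnected. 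In the abelian case you claim that a nontrivial $\R^n$-factor cannot coexist with small subgroups in an \HLM\ group, but the ``same obstruction'' you invoke produces profinite closed subgroups, not an argument ruling out $\R\times\Z_p$; that exclusion needs a separate construction (in the paper's ecosystem this is \cite[Corollary 4.7]{DHXX}, invoked for instance in Proposition~\ref{prop:hscon}). Finally, even granting the obstruction, reducing a profinite abelian \HLM\ group to one with an open $\Z_p$ still requires work beyond ``cannot contain an infinite product of finite cyclic groups''. What you have is an outline, not a proof.
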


In this paper, we study the topological groups having all {\em dense} subgroups (locally) minimal. We mainly focus on locally compact abelian groups.
\begin{definition}\label{dmdlm}
A topological group $G$ is said to be {\em densely (locally) minimal}, if every dense subgroup of $G$ is  (locally) minimal.
\end{definition}

Here are some examples of compact abelian groups that are not densely locally minimal.
\begin{example} \label{ex:DHM:groups} \

\begin{enumerate}
\item
The Pontryagin dual $\widehat{\Q}$ of the discrete group $\Q$ is monothetic, i.e., $\widehat{\Q} = \overline{ C }$ for some (infinite) cyclic subgroup $C$ of $\widehat{\Q}$. Obviously $C$ is (algebraically) isomorphic to $\Z$, but it does not carry the $p$-adic topology for any prime $p$. So $C$ is not minimal, since the $p$-adic topologies are the only minimal topologies on $\Z$ (see for example, \cite[page 5]{DMe}). Moreover, as $\widehat{\Q}$ is a compact torsion-free abelian group we deduce that $C$ is not even locally minimal by \cite[Corollary 4.7]{ACDD}.

\item
By \cite[Lemma 3.1]{DHXX}, $\Z_{p}^{2}$ has a dense subgroup that fails to be locally minimal.

\item
If $p\ne q$ are primes, then $\Z_{p} \times \Z_{q}$ is not densely locally minimal by \cite[Lemma 3.2]{DHXX}.
\end{enumerate}
\end{example}

In fact, Theorem A,  which generalizes in part Fact  \ref{fac:ext}, characterizes all compact abelian groups that are \DLM. This answers {\cite[Question 5.3]{DHXX}} in the positive for compact groups.

\begin{theoremA}
For a compact abelian group $K$, the following conditions are equivalent:
\ben[(a)]
\item $K$ is a \HLM \ group;
\item $K$ is a \DLM \ group;
\item $K$ either is a Lie group or has an open subgroup isomorphic to $\Z_p$ for some prime $p.$\een
\end{theoremA}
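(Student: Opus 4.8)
The plan is to treat the three conditions asymmetrically. The implication (a)$\Rightarrow$(b) is immediate, since a dense subgroup is in particular a subgroup, so a \HLM\ group is automatically \DLM; and (a)$\Leftrightarrow$(c) is nothing but Fact~\ref{fac:ext} applied to $K$, which is locally compact abelian and hence falls under its hypotheses. Thus the entire mathematical content of the theorem is the implication (b)$\Rightarrow$(a), and I would establish it in contrapositive form: \emph{a compact abelian group $K$ that fails condition (c) possesses a dense subgroup which is not locally minimal}, so that $K$ is not \DLM.

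The engine for the argument is the local-minimality analogue of the Prodanov--Stephenson minimality criterion (see \cite{ACDD}): for $K$ compact abelian and $H$ dense in $K$, $H$ is locally minimal if and only if $H$ is \emph{locally essential}, i.e. there is a neighbourhood $U$ of $0$ with $H\cap N\neq\{0\}$ for every nontrivial closed subgroup $N\subseteq U$. To witness the failure of dense local minimality I must therefore manufacture a dense $H$ that is \emph{not} locally essential: for every neighbourhood $U$ of $0$ there is a nontrivial closed subgroup $N\subseteq U$ with $H\cap N=\{0\}$. The role of the hypothesis ``$K$ fails (c)'' is precisely to supply, in every neighbourhood of $0$, such an \emph{avoidable} small closed subgroup. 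Indeed, $K$ not being a Lie group means (being no $\T^{n}\times F$) it has nontrivial closed subgroups in every neighbourhood of $0$; and $K$ having no open subgroup isomorphic to $\Z_p$ means these small subgroups are not organised into a single $p$-adic chain (as in $\Z_p$, where every nonzero closed subgroup is some $p^{n}\Z_p$ and hence \emph{unavoidable} by a dense subgroup, which is exactly why $\Z_p$ and finite groups are \DLM).

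I would then organise the construction through Pontryagin duality and the connected component $K_0$, whose annihilator in $\widehat K$ is the torsion part, so that $K/K_0$ is profinite while $K_0$ is dual to a torsion-free group. In the \emph{connected} regime, if $K_0$ is not a finite-dimensional torus, then $K_0$ carries a dense monothetic $C\cong\Z$ whenever its weight permits, and the mechanism of Example~\ref{ex:DHM:groups}(1) shows such a $C$ misses the small coordinate subgroups and so is not locally minimal; large-weight $K_0$ is handled by reducing to a countable-weight non-torus quotient. In the \emph{totally disconnected} regime the failure of the open-$\Z_p$ condition forces, in arbitrarily small neighbourhoods, either two independent small $p$-primary subgroups (the $\Z_p^{2}$ phenomenon of Example~\ref{ex:DHM:groups}(2), via \cite[Lemma 3.1]{DHXX}) or small subgroups attached to distinct primes (the $\Z_p\times\Z_q$ phenomenon of Example~\ref{ex:DHM:groups}(3), via \cite[Lemma 3.2]{DHXX}, and more generally the behaviour of $\prod_p\Z(p)$). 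In each case the extra independent direction is what allows one to spread a dense subgroup along infinitely many pairwise disjoint coordinate tails while keeping it disjoint from a shrinking sequence of small closed subgroups.

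The step I expect to be the main obstacle is this last construction: reconciling \emph{density} with the \emph{systematic avoidance} of arbitrarily small nontrivial closed subgroups. Density forces every finite projection of $H$ onto, which in a single $p$-adic or single-prime direction immediately reinstates the avoided subgroup; the failure of (c) must therefore be converted into genuinely independent directions that absorb the density requirement elsewhere, and the delicate point is to do this uniformly as $U$ shrinks to $0$. Once the local essentiality criterion is combined with a clean split of the (c)-failure into the connected non-torus case and the higher-rank / multi-prime totally disconnected case, and the model constructions of Example~\ref{ex:DHM:groups} together with \cite[Lemmas 3.1, 3.2]{DHXX} are bootstrapped to a general compact abelian $K$, the implication (b)$\Rightarrow$(a) follows, closing the cycle and yielding $(a)\Leftrightarrow(b)\Leftrightarrow(c)$.
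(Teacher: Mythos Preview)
Your reduction to (b)$\Rightarrow$(c) and your identification of the local essentiality criterion as the engine are both correct, and (a)$\Leftrightarrow$(c), (a)$\Rightarrow$(b) are handled exactly as in the paper. But your case analysis via the connected component $K_0$ is incomplete, and this is a genuine gap rather than a missing detail.

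Your dichotomy covers ``$K_0$ is not a finite-dimensional torus'' and ``$K$ is totally disconnected''. Consider however $K=\T^n\times\Z_p$ with $n\ge1$: here $K_0=\T^n$ \emph{is} a torus and $K/K_0\cong\Z_p$ \emph{does} satisfy (c), yet $K$ fails (c) (it is not Lie, and having nontrivial $K_0$ it cannot contain an open copy of $\Z_p$). Neither branch of your plan applies. More generally, whenever $K_0$ is a torus one has $K\cong\T^n\times L$ with $L$ profinite, but $L$ may well be \DLM\ (e.g.\ $L=\Z_p$), so the work cannot be pushed to $L$ via Lemma~\ref{lem:DHM}. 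The paper's Proposition~\ref{lem:STEP4} is precisely the device that closes this mixed case: given $N\cong F\times\Z_p$ with $K/N\cong\T^\kappa$ and $K$ \DLM, one forces $\kappa=0$ by constructing a specific dense subgroup $H=F+\langle z+b\rangle$ (with $b$ a monothetic generator of $c(K)$ and $\langle z\rangle$ dense in $\Z_p$) and showing it is not locally essential. This construction is the missing idea in your outline.

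There is also a structural difference worth noting. The paper does not organise via $K_0$ at all: it uses the splitting $K=K_{tor}\times K_d$ with $K_{tor}$ bounded and $K_d$ w-divisible (Fact~\ref{fac:SPL}), kills $K_{tor}$ by Lemma~\ref{direct:sum:cyclic}, and then exploits that a w-divisible compact abelian group carries a \emph{dense free abelian subgroup} (Fact~\ref{fac:WDC}). Local essentiality of a torsion-free dense subgroup, via Fact~\ref{fac:lintop}, forces the profinite kernel $N$ (with $K_d/N\cong\T^\kappa$) to have only finite torsion; a cardinality count (Lemma~\ref{new:lemma:W:L}, Proposition~\ref{new:W:L}) and Corollary~\ref{cor:product} then pin $N$ down to $\Z_p\times(\text{finite})$. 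In particular, the ``bootstrap from $\Z_p^2$ and $\Z_p\times\Z_q$ to a general profinite group'' that you flag as the main obstacle is sidestepped: once $N_1\cong\prod_p\Z_p^{\kappa_p}$, Lemma~\ref{lem:DHM} reduces to the product, where Example~\ref{ex:DHM:groups}(2)--(3) already suffice. Your large-weight ``reduce to a countable-weight non-torus quotient'' step is likewise replaced by the direct counting argument of Lemma~\ref{new:lemma:W:L}, which avoids the problem that a bad dense subgroup of a quotient need not lift to a bad dense subgroup of $K$.
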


 For compact two-step nilpotent groups  conditions $(a)$ and $(b)$ of Theorem A are not equivalent (see Example \ref{ex:ac}).

 \medskip

 Theorem A and the next result, which provides another connection to Fact  \ref{fac:ext}, are both  proved in Section \ref{sec:thmA} by using some preliminary results from Section \ref{sec:pr}.
\begin{theoremB}
	For a connected locally compact abelian group $G$, the following conditions are equivalent:
	\ben[(a)]
	\item $G$ is a \HLM \ group;
	\item $G$ is a \DLM \ group;
	\item $G$ is a Lie group.\een
\end{theoremB}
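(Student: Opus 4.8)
The plan is to prove the cycle of implications $(a)\Rightarrow(b)\Rightarrow(c)\Rightarrow(a)$. The implication $(a)\Rightarrow(b)$ is immediate, since every dense subgroup is in particular a subgroup, so a \HLM\ group is automatically \DLM. For $(c)\Rightarrow(a)$ I would simply invoke Fact~\ref{fac:ext}: a connected locally compact abelian $G$ is both abelian and connected, so the Fact applies, and if $G$ is a Lie group then condition $(b)$ of that Fact is satisfied, whence $G$ is \HLM. Thus all the substance lies in the implication $(b)\Rightarrow(c)$.

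To prove $(b)\Rightarrow(c)$, I would first appeal to the structure theory of connected locally compact abelian groups to write $G\cong \R^n\times K$, where $K$ is a compact connected abelian group, and recall that $G$ is a Lie group precisely when $K$ is one, i.e.\ when $K$ is a torus $\T^m$. The goal is therefore to show that the \DLM\ property of $G$ forces $K$ to be a Lie group, and the heart of the argument is the claim that $K$ inherits dense local minimality from $G$. Given a dense subgroup $D\le K$, the product $\R^n\times D$ is dense in $G$ and hence locally minimal by hypothesis; the task is then to descend local minimality from $\R^n\times D$ to $D$.

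For this descent I would use the criterion for local minimality of a dense subgroup of a locally compact abelian group supplied by the preliminary results of Section~\ref{sec:pr} (local essentiality): a dense subgroup is locally minimal exactly when it meets every nontrivial closed subgroup lying inside a fixed neighborhood of the identity. The crucial point is that $\R^n$ has no small subgroups, so after shrinking to a neighborhood of the form $U_1\times U_2$ with $U_1$ a small ball in $\R^n$, any closed subgroup of $G$ contained in it projects trivially to $\R^n$ and hence lies in $\{0\}\times K$. This puts the nontrivial closed subgroups of $G$ inside $U_1\times U_2$ in bijective correspondence with those of $K$ inside $U_2$, so local essentiality of $\R^n\times D$ in $G$ translates exactly into local essentiality of $D$ in $K$; therefore $D$ is locally minimal, and $K$ is \DLM.

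Finally, I would apply Theorem~A to the compact abelian group $K$: being \DLM, it is either a Lie group or has an open subgroup isomorphic to $\Z_p$ for some prime $p$. Since $K$ is connected it has no proper open subgroup, so the second alternative would force $K\cong\Z_p$, which is impossible as $\Z_p$ is infinite and totally disconnected. Hence $K$ is a Lie group, i.e.\ a torus $\T^m$, and $G\cong\R^n\times\T^m$ is a Lie group, establishing $(b)\Rightarrow(c)$. I expect the descent step—correctly importing the local essentiality criterion and exploiting the no-small-subgroups property of $\R^n$ to pass from $\R^n\times D$ to $D$—to be the main obstacle, whereas the structural reduction $G\cong\R^n\times K$ and the appeal to Theorem~A are comparatively routine.
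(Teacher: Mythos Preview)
Your proof is correct and follows the same strategy as the paper: reduce via the structure theorem $G\cong\R^n\times K$, show that the compact connected factor $K$ inherits dense local minimality, and then apply Theorem~A. The only difference is that you work out the descent from $\R^n\times D$ to $D$ by hand using local essentiality and the NSS property of $\R^n$, whereas the paper simply invokes Lemma~\ref{lem:DHM} (each direct factor of a \DLM\ group is \DLM), which replaces what you flag as the ``main obstacle'' with a one-line citation.
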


In case a topological abelian group  is either compact or connected locally compact, then it is \DLM\ if and only if it is \HLM\ by Theorem A and Theorem B. Negatively answering {\cite[Question 5.3]{DHXX}}, we now show  that this equivalence is not true in general.

\begin{example}\label{ex:nab}
Obviously, if a locally minimal group has no proper dense subgroups, it is \DLM. In particular, this implication is true for locally compact groups.

Khan \cite[Proposition 5.1]{KH} proved that a non-discrete locally compact abelian group $G$ has no proper dense subgroups if and only if $G$ is totally disconnected, every compact subgroup of $G$ is torsion, and $pG$ is an open subgroup of $G$ for every prime $p$.
In  particular, such a group neither is a Lie group nor it contains a copy of $\Z_p$, for any prime $p$. By Fact \ref{fac:ext},  $G$ is not \HLM.

For a concrete example, consider the following construction. Equip $K =\Z(2)^\omega $ with the usual product topology, and consider the group $G=  \Z(2^{\infty}) ^\omega$, equipped with the smallest group topology having $K$ open.
Then $G$ is non-discrete, locally compact, divisible, abelian group, and one can check that $G$ has no proper dense subgroups (see for example {\cite[\S 7.2]{HBSTT}}).
\end{example}

Apart from taking subgroups, the class of minimal groups is also not stable under taking Hausdorff quotients, so Dikranjan and Prodanov \cite{DP} introduced the following stronger notion: a topological group is \emph {totally minimal} if all of its Hausdorff quotients are minimal. In this paper, we also consider the topological groups having all dense subgroups totally minimal.
\begin{definition}
A topological group $G$ is said to be {\em densely totally minimal}, if every dense subgroup of $G$ is totally minimal.
\end{definition}
By the Total Minimality Criterion (see Fact \ref{Crit}(3)), a group is densely totally minimal if and only if it is totally minimal and every dense subgroup is totally dense. This implies the following reformulation of T. Soundararajan \cite{ts} (see  also \cite[Theorem 3.1]{CD0} and \cite[Exercise 5.5.6]{DPS}).
\begin{fact}
An infinite compact abelian group that is densely totally minimal is isomorphic to $\Z_p$ for some prime $p$.
\end{fact}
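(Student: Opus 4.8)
The plan is to strip the word ``totally'' by means of the criterion already quoted, and then to classify those compact abelian groups all of whose dense subgroups are totally dense. First I would observe that every compact group is totally minimal: each of its Hausdorff quotients is again compact, hence minimal. Thus, by the Total Minimality Criterion (Fact~\ref{Crit}(3)), for a compact abelian group $K$ being \DTM\ is equivalent to the single condition that every dense subgroup of $K$ be totally dense. This turns the statement into Soundararajan's classification \cite{ts} (see also \cite[Theorem 3.1]{CD0} and \cite[Exercise~5.5.6]{DPS}); what follows is a route to it through the results of the present paper.

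For the nontrivial implication I would argue by contraposition, via Theorem~A. A totally minimal group is minimal and a minimal group is locally minimal, so a \DTM\ group is in particular \DLM; hence by Theorem~A an infinite \DTM\ compact abelian group $K$ is either a Lie group or has an open subgroup isomorphic to $\Z_p$. In the first case $K\cong\T^n\times F$ with $n\ge 1$ and $F$ finite; in the second case $K$ is totally disconnected with finite torsion subgroup $F:=t(K)$ and $K/F\cong\Z_p$, and the (torsion-free, procyclic) quotient lifts to give a splitting $K\cong\Z_p\times F$ with $F$ finite. In either case $K\cong\Z_p$ exactly when the finite factor $F$ (respectively the toral rank $n$) is trivial, so it remains to rule out a nontrivial factor.

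To do so I would produce in each such $K$ a \emph{dense, non-essential} subgroup, which then fails to be minimal by the Minimality Criterion (see Fact~\ref{Crit}), contradicting that $K$ is \DM\ (a consequence of being \DTM). In the case $K\cong\Z_p\times F$ with $F=\langle c_1,\dots,c_r\rangle\ne 0$, choose $t_1,\dots,t_r\in\Z_p$ so that $1,t_1,\dots,t_r$ are linearly independent over $\Q$, and put
\[
H=\Z\,(1,0)+\sum_{i=1}^{r}\Z\,(t_i,c_i).
\]
The free integer parameter in the first coordinate lets $H$ surject onto every quotient $\Z(p^k)\times F$, so $H$ is dense; while $(a+\sum_i b_i t_i,\ \sum_i b_i c_i)\in\{0\}\times F$ forces the $\Q$-relation $a+\sum_i b_i t_i=0$ with integer coefficients, whence $a=b_1=\dots=b_r=0$, so $H\cap(\{0\}\times F)=0$ and $H$ misses the nontrivial closed subgroup $\{0\}\times F$. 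The toral case $K\cong\T^n\times F$ is analogous: take $H=\langle\alpha\rangle\times D'$, where $\langle\alpha\rangle$ is a dense cyclic subgroup of the first circle with $\tfrac12\notin\langle\alpha\rangle$ and $D'$ is dense in $\T^{n-1}\times F$; then $H$ is dense but meets the order-two subgroup of the first circle trivially. Hence no nontrivial factor survives and $K\cong\Z_p$.

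The converse, though not needed for the stated one-sided implication, explains the extremal role of $\Z_p$ and isolates the obstacle: the closed subgroups of $\Z_p$ form the single chain $\{0\}\subset\dots\subset p^2\Z_p\subset p\Z_p\subset\Z_p$, so if $H$ is dense it contains a unit $u$ and then $p^n u\in H\cap p^n\Z_p$ topologically generates $p^n\Z_p$, making every dense subgroup totally dense. The main difficulty is exactly this classification step, namely transferring the failure of essentiality (or of total density) from a prescribed closed subgroup up to a dense subgroup of all of $K$; this is where the $\Q$-independent ``skew'' generators, the irrational rotation in the toral case, and the density check against every finite quotient do the real work.
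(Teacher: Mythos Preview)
Your argument is correct. Note, however, that the paper does not give its own proof of this Fact: it is stated as a reformulation of Soundararajan's theorem \cite{ts}, obtained by observing (as you also do) that for a compact group the Total Minimality Criterion reduces ``densely totally minimal'' to ``every dense subgroup is totally dense''. So the paper's ``proof'' is a citation, and your write-up supplies an independent argument using the machinery developed later in the paper.

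Your route is close to, but not identical with, the paper's proof of the stronger Theorem~C (of which the present Fact is a special case). The paper first applies the splitting $K=K_{tor}\times K_d$ of Fact~\ref{fac:SPL}, kills $K_{tor}$ by Lemma~\ref{direct:sum:cyclic}, identifies $K_d\cong\Z_p$ via Lemma~\ref{lem:MTF} (which in turn rests on Theorem~A through Corollary~\ref{lem:SHM}), and finishes with Lemma~\ref{Zp:times:finite:ab}. You instead invoke Theorem~A directly to obtain the Lie/open-$\Z_p$ dichotomy, dispose of the Lie case by an explicit dense non-essential subgroup of $\T^n\times F$, and in the second case reduce to $\Z_p\times F$ and reprove Lemma~\ref{Zp:times:finite:ab}. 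Both approaches ultimately lean on Theorem~A; yours avoids the w-divisible splitting and Lemma~\ref{lem:MTF} at the cost of handling the toral case by hand.

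Two small points of justification you might add in the second case: that $t(K)$ is finite follows because $t(K)\cap U=0$ (with $U\cong\Z_p$ open and torsion-free), so $t(K)$ embeds in the finite group $K/U$; and the splitting $K\cong\Z_p\times t(K)$ is cleanest via Pontryagin duality, since $\widehat{K}$ then contains the divisible group $\widehat{K/t(K)}\cong\Z(p^\infty)$ with finite index, and divisible subgroups always split off. Your phrase ``the (torsion-free, procyclic) quotient lifts'' is correct but terse.
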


As a densely totally minimal group is obviously densely minimal, we extend the above fact as follows.
\begin{theoremC}
For an infinite locally compact abelian group $K$, the following conditions are equivalent:
\ben[(a)]
\item $K$ is a \HM \ group;
\item $K$ is a \DM \ group;
\item $K$ is isomorphic to $\Z_p$ for some prime $p.$\een
\end{theoremC}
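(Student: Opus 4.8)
The plan is to establish the two non-trivial implications $(c)\Rightarrow(a)$ and $(b)\Rightarrow(c)$, since $(a)\Rightarrow(b)$ is immediate from the definitions (a dense subgroup is in particular a subgroup). The implication $(c)\Rightarrow(a)$ is exactly Prodanov's theorem, Fact \ref{TeoP}. So the substance is $(b)\Rightarrow(c)$, and I would begin by reducing to the compact case: since $K$ is (trivially) dense in itself, $K$ is minimal, and hence compact by Stephenson's theorem (Fact \ref{fact:step}). As every minimal group is locally minimal, $K$ is densely locally minimal, so Theorem A applies and tells us that $K$ is either a Lie group or has an open subgroup isomorphic to $\Z_p$. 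A short structural analysis then pins down the two possible shapes of the infinite group $K$: in the Lie case $K\cong\T^n\times F$ with $n\ge 1$ and $F$ finite, while in the second case total disconnectedness together with the finiteness of the index of the open copy of $\Z_p$ forces $K$ to be a finitely generated $\Z_p$-module of rank one, i.e. $K\cong\Z_p\times F$ with $F$ a finite abelian group. It then remains to rule out the torus factor and to show that $F$ must be trivial, which yields $K\cong\Z_p$.

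The main tool for this last step is the Minimality Criterion: a dense subgroup $H$ of a compact abelian group is minimal if and only if $H$ is \emph{essential}, i.e. $H\cap N\neq\{0\}$ for every non-trivial closed subgroup $N$. First I would record that dense minimality is inherited by closed direct summands: if $K=A\times B$ is densely minimal and $H$ is dense in $A$, then $H\times B$ is dense in $K$, hence essential in $K$; intersecting with $N\times\{0\}$ for a non-trivial closed $N\le A$ gives $(H\cap N)\times\{0\}\neq\{0\}$, so $H$ is essential in $A$, and therefore $A$ is densely minimal. Consequently it suffices to exhibit, in each unwanted case, a direct summand that is \emph{not} densely minimal. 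Since $\T$ is a direct summand of $\T^n\times F$, the Lie case is eliminated by the observation that $\T$ is not densely minimal: a dense cyclic subgroup $\langle\alpha\rangle$ generated by an element of infinite order is torsion-free, hence meets the closed subgroup of order two trivially and is non-essential. Likewise, if $F\neq\{0\}$ then $F$, and hence $K$, has a cyclic direct summand $\Z(\ell^t)$ with $\ell$ prime, so it suffices to prove that $\Z_p\times\Z(\ell^t)$ is not densely minimal.

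The hard part is precisely this last claim when $\ell=p$. For $\ell\neq p$ one can repeat the torsion-free idea: $H_0=\ell^t\Z$ is dense in $\Z_p$ (as $\ell$ is a unit) and $\Z_p/H_0$ contains an element of order $\ell^t$, so lifting the embedding $\Z(\ell^t)\hookrightarrow\Z_p/H_0$ to a set-section yields a dense subgroup of $\Z_p\times\Z(\ell^t)$ meeting the socle $\{0\}\times\Z(\ell^t)[\ell]$ trivially. When $\ell=p$, however, the torsion is $p$-adically aligned with the $\Z_p$-direction: every infinite closed subgroup of $\Z_p\times\Z(p^t)$ is open, hence met by all dense subgroups, so the only closed subgroup a dense subgroup could possibly avoid is the finite socle. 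The obstacle is therefore to produce a dense subgroup $H_0\le\Z_p$ whose \emph{abstract} quotient $\Z_p/H_0$ nevertheless carries $p$-torsion. I would achieve this by a $\Q$-linear-independence trick: choose $w\in\Z_p\setminus\Q$, put $u=1+p^tw$ and $H_0=p^t\Z+u\Z$. Then $u$ is a unit, so $H_0$ is dense; and $p^j\notin H_0$ for $0\le j<t$, because $1$ and $w$ are $\Q$-independent in $\Q_p$, so $1+H_0$ has order $p^t$ in $\Z_p/H_0$.

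Lifting the resulting embedding $\Z(p^t)\hookrightarrow\Z_p/H_0$ to a section $\sigma$, the subgroup $H=\{(\sigma(j)+a,\,j):\ j\in\Z(p^t),\ a\in H_0\}$ is a genuine subgroup (the cocycle condition holds since $\sigma$ lifts a homomorphism and $p^t\in H_0$), it is dense (its projection onto $\Z(p^t)$ is onto and $H_0$ is dense in $\Z_p$), and it meets $\{0\}\times\Z(p^t)[p]$ trivially (an element $(\sigma(j)+a,j)$ lies in the socle only if $j+H_0$ has small order, forcing $j=0$). Hence $H$ is non-essential and, by the Minimality Criterion, non-minimal, so $\Z_p\times\Z(p^t)$ is not densely minimal. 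This contradiction forces $F=\{0\}$ and, together with the exclusion of the torus, gives $K\cong\Z_p$, completing $(b)\Rightarrow(c)$. I expect the $\ell=p$ construction to be the genuine difficulty, since it is exactly where the naive torsion-free witnesses used in the torus and in the $\Z_p\times\Z_q$ cases break down.
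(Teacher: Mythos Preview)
Your argument is correct. Both proofs reduce to the compact case via Stephenson's theorem and ultimately rest on Theorem~A, but the intermediate routes differ. The paper first invokes the splitting $K=K_{tor}\times K_d$ of Fact~\ref{fac:SPL}, forces $K_{tor}$ to be finite via Lemma~\ref{direct:sum:cyclic}, then shows $K_d\cong\Z_p$ through Lemma~\ref{lem:MTF} (a dense free subgroup in the w-divisible part, followed by Corollary~\ref{lem:SHM}, which is where Theorem~A actually enters), and finally eliminates the finite factor with Lemma~\ref{Zp:times:finite:ab}. You instead feed $K$ directly into Theorem~A to obtain the Lie/open-$\Z_p$ dichotomy, dispose of the Lie case by noting that $\T$ is not \DM, and reach $K\cong\Z_p\times F$ via elementary structure theory of profinite abelian groups (your phrase ``finitely generated $\Z_p$-module of rank one'' strictly applies only to the $p$-primary component $K_p$, since $q$-torsion for $q\neq p$ is not a $\Z_p$-module, but the conclusion $K\cong\Z_p\times F$ is correct once one decomposes $K=\prod_q K_q$). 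Your elimination of $F$ also differs in execution: the paper's Lemma~\ref{Zp:times:finite:ab} handles an arbitrary finite $F$ in a single construction by choosing $|F|$ elements of $\Z_p$ that are $\Z$-independent together with $1$, whereas you pass to a cyclic summand $\Z(\ell^t)$ and build the witness from a dense $H_0\le\Z_p$ whose abstract quotient $\Z_p/H_0$ carries an element of order $\ell^t$; the two constructions are really the same idea viewed from different angles. Your path avoids the splitting theorem and the w-divisibility machinery in exchange for a short case analysis; the paper's path is more modular and isolates the reusable Lemmas~\ref{lem:MTF} and~\ref{Zp:times:finite:ab} along the way.
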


We prove Theorem C in Section \ref{sec:vs}, where  dense minimality  and hereditary minimality  are  compared in general.
In particular, we see that these properties are not equivalent even for  compact two-step nilpotent groups (see Example \ref{ex:ac}).

In \cite[Proposition 3.9]{firstpaper}, we proved that a \HM\ locally compact group is totally disconnected,  while Example \ref{ex:SO} shows that there exist \DTM\ compact groups that are pathwise connected.

\smallskip
Section \ref{sec:open} collects some open questions and final remarks. Furthermore, Proposition \ref{prop:hscon} and Question \ref{q:HSC} deal with the Hilbert-Smith Conjecture.
	
\bigskip

The next diagram describes some of the interrelations between the properties considered so far. The concepts, which are introduced in Definition \ref{hmhlm} and Definition \ref{dmdlm}, are abbreviated here to HM, HLM, DM and DLM. The double arrows denote implications that
always hold.
The single arrows denote implications valid  under some additional assumptions reported in brackets.

 $${\xymatrix@!0@C4.5cm@R=3.3cm{
 		\mbox{HLM}
 		\ar@/_1.2pc/|-{(2)
 		}[r]
 		\ar@{=>}[d]
 		&
 		\mbox{HM}
 		\ar@{=>}[l]
 		\ar@{=>}[d]\\
 		\mbox{DLM}
 		\ar@/^1.2pc/|-{(4)
 		}[r]
 		\ar@/^1.2pc/|-{(1) 	
 		} [u]
 		&
 		\mbox{DM}
 		\ar@{=>}[l]
 		\ar@/_1.2pc/|-{
 			(3)
 		} [u]
 }}$$
 (1): This implication holds true for abelian groups that are either compact (Theorem A), or connected locally compact (Theorem B).\\
 (2): This implication holds true for compact torsion-free groups (for a more general result see \cite[Theorem B]{firstpaper}). \\
 (3):  This implication holds true for locally compact abelian groups (Theorem C), but fails even for compact two-step nilpotent groups (Example \ref{ex:ac}).\\
 (4): This implication holds true for compact groups having no finite normal non-trivial subgroups,
 that are either abelian (\cite[Corollary 4.7]{ACDD}) or totally disconnected (\cite[Proposition 3.11]{firstpaper}).\\

\section{Notation and preliminary results} \label{sec:pr}
We denote by $\Z$  the group of integers, and by  $\N$ and $\N_{+}$  its subsets of non-negative integers and positive natural numbers, respectively. The groups of rationals, reals and the unit circle  are denoted, respectively, by $\mathbb{Q}, \mathbb{R}$  and  $\mathbb{T}.$ For $n\in \N_+$, we denote by $\Z(n)$ the finite cyclic group with $n$ elements. For a prime number $p$, $\Q_p$ stands for the field of $p$-adic numbers, $\Z_p$ is its subring of $p$-adic integers and $\mathbb{Z}(p^\infty)$ is the  quasicyclic $p$-group (the Pr\" ufer group).

Let $G$ be a group. The abbreviation $K\leq G$ is used to denote a  subgroup  $K$ of $G$ and $e$ denotes the identity element.
If $A$ is a non-empty subset of $G$, we denote by $\langle A\rangle$ the subgroup of $G$ generated by $A$. In particular, if $x$ is an element of $G$, then $\langle x\rangle$ is a cyclic subgroup. If $F=\langle x\rangle$ is finite, then $x$ is called a \emph{torsion} element. We denote by $t(G)$ the torsion part of a group $G$, and $G$ is called \emph{torsion} if $t(G) = G$, while $G$ is called \emph{torsion-free} if $t(G)$ is  trivial. If $G$ is abelian, then $t(G) \leq G$. For an abelian group $G$ and $n\in \N_+$, let $nG=\{ng: g\in G\}$. Then $G$ is \emph{bounded} if $n G$ is trivial for some $n\in \N_+$, and a bounded group is torsion.
If $G=nG$ for every $n\in \N_+$, then $G$ is called \emph{divisible}.

All  topological groups in this paper are assumed to be Hausdorff. The closure of $H$ in $G$ is denoted by $\overline{H},$ and $c(G)$ is the connected component of $G$. The {\em weight} and {\em density} of $G$ are denoted by $w(G)$ and $d(G)$, respectively. The Pontryagin dual of a locally compact abelian group $G$ is denoted by $\widehat{G}$.

All unexplained terms related to general topology can be found in \cite{En}. For background on abelian groups, see \cite{Fuc}.

\bigskip

There exist useful criteria for establishing the minimality, local minimality, and total minimality of a dense subgroup of a minimal, locally minimal, and totally minimal group, respectively. These criteria are based on the following definitions.
\begin{definition}
Let $H$ be a subgroup of a topological group $G$.
\ben
	\item \cite{P,S71} $H$ is  {\it essential} in $G$ if $H\cap N\neq \{e\}$ for every non-trivial closed normal subgroup $N$ of $G.$
	\item \cite{ACDD} $H$ is  {\it locally essential} in $G$ if there exists a neighborhood $V$  of $e$ in $G$ such that $H\cap N\neq \{e\}$ for every non-trivial closed normal subgroup $N$ of $G$ which is contained in $V.$
\item \cite{ts} $H$ is {\it totally dense} in $G$ if $H\cap N$ is dense in $N$ for every closed normal subgroup $N$ of $G$.
\een
\end{definition}

Obviously, a totally dense subgroup of $G$ is dense and essential in $G$, and an essential subgroup  is locally essential.

\begin{fact}\label{Crit}
Let $H$ be a dense subgroup of a topological group $G.$
\ben
\item  \cite[Minimality Criterion]{B}  $H$ is minimal if and only if $G$ is minimal and $H$ is essential in $G$ (for compact $G$ see also \cite{P,S71}).
\item  \cite [Local Minimality Criterion]{ACDD}  $H$ is locally  minimal if and only if $G$ is locally minimal and $H$ is locally essential in $G.$
\item  \cite[Total Minimality Criterion]{DP}
$H$ is totally minimal if and only if $G$ is totally minimal and $H$ is totally dense  in $G$.
\een
\end{fact}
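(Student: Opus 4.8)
All three criteria rest on one mechanism -- completing the coarser topology and extending the identity across the dense subgroup -- so I would treat them uniformly. Fix a dense subgroup $H$ of $(G,\tau)$ and, to test minimality of $H$, let $\sigma\subseteq\tau|_H$ be any coarser Hausdorff group topology on $H$; the whole game is to show $\sigma=\tau|_H$. The plan is to pass to the Raikov completion $(K,\widetilde\sigma)$ of $(H,\sigma)$, so that the completion map $j\colon(H,\tau|_H)\to K$ is an injective continuous homomorphism with $\widetilde\sigma|_{j(H)}=\sigma$. Since homomorphisms are uniformly continuous and $H$ is dense in the Raikov completion $\overline G$ of $(G,\tau)$, the map $j$ extends to a continuous homomorphism $\overline\jmath\colon\overline G\to K$. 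The closed normal subgroup $N:=\ker\overline\jmath$ of $\overline G$, cut down to $N_0:=N\cap G$, is the single object that controls everything: $N_0$ is a closed normal subgroup of $G$, and $N_0\cap H=\{e\}$ because $h\in N_0\cap H$ forces $j(h)=\widetilde e$, whence $h=e$ by Hausdorffness of $\sigma$.

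I would dispatch the necessity directions first, as they are the routine ones. Suppose $H$ is minimal. For essentiality, assume some nontrivial closed normal $N\leq G$ had $N\cap H=\{e\}$; then $H\hookrightarrow G\to G/N$ is injective and endows $H$ with a coarser Hausdorff topology, which by minimality of $H$ must equal $\tau|_H$, making $H$ embed densely in $G/N$ as well as in $G$. Uniqueness of the Raikov completion then identifies $\overline G$ with $\overline{G/N}$ over the dense subgroup $H$, forcing the quotient map to be injective on $G$ and hence $N=\{e\}$, a contradiction; thus $H$ is essential. That $G$ is minimal follows by restriction: a coarser Hausdorff $\gamma\subseteq\tau$ restricts to a coarser Hausdorff topology on $H$, equal to $\tau|_H$ by minimality of $H$, and two group topologies on $G$ agreeing on the dense subgroup $H$ with one finer must coincide.

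For the sufficiency direction -- the substantive one -- assume $G$ minimal and $H$ essential. With the notation above, essentiality applied to the closed normal subgroup $N_0$, together with $N_0\cap H=\{e\}$, forces $N_0=\{e\}$. Hence $\overline\jmath|_G\colon G\to K$ is an injective continuous homomorphism, so pulling back $\widetilde\sigma$ along it gives a coarser Hausdorff group topology $\rho\subseteq\tau$ on $G$ with $\rho|_H=\sigma$. Minimality of $G$ yields $\rho=\tau$, and restricting to $H$ gives $\sigma=\tau|_H$, as required. For the local minimality criterion I would run the same argument but keep track of neighborhoods: the hypothesis that $\sigma$ contains a prescribed $\tau$-neighborhood confines $N_0$ to a small closed normal subgroup, so it is local essentiality -- meeting only the small closed normal subgroups nontrivially -- that is exactly what is needed, and the local-minimality witness of $G$ replaces plain minimality at the last step. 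For the total minimality criterion I would reduce to the minimal case in every Hausdorff quotient at once: total density of $H$ makes $H\cap N$ dense, hence essential, in each closed normal $N\leq G$, so each image $HN/N$ is dense and essential in the minimal group $G/N$; applying part (1) in every quotient gives minimality of all Hausdorff quotients of $H$, that is, total minimality.

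The step I expect to be the main obstacle is precisely the sufficiency direction when $G$ is not complete, where the extension $\overline\jmath$ is built over $\overline G$ rather than $G$: one must verify that $N_0=N\cap G$ is genuinely a closed normal subgroup of $G$ (and that $\overline\jmath|_G$ remains injective after cutting down) so that the essentiality hypothesis, which lives in $G$, can be invoked. Equally delicate is the neighborhood bookkeeping in the local case -- pinning down exactly which neighborhood witnesses local minimality of $H$ in terms of those for $G$ and the local essentiality of $H$ -- and, in the total case, matching the Hausdorff quotients of the dense subgroup $H$ with the quotients $G/N$ of the ambient group, which is where the distinction between essentiality and the stronger total density becomes essential.
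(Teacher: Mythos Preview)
The paper does not prove this statement: Fact~\ref{Crit} is stated with citations to \cite{B}, \cite{ACDD}, \cite{DP}, \cite{P}, \cite{S71} and no argument is given in the text, so there is nothing to compare your attempt against at the level of the paper itself.

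That said, your outline is essentially the classical proof one finds in those references (completion of the coarser topology, extension of the identity via uniform continuity, and analysis of the kernel of the extended map), and the necessity and sufficiency directions for part~(1) are handled in the standard way. Two remarks on the parts you yourself flag as delicate. First, in the necessity direction ``$H$ minimal $\Rightarrow$ $G$ minimal'', the step ``two group topologies on $G$ agreeing on a dense subgroup, one finer than the other, must coincide'' is correct but not entirely trivial; it uses regularity of group topologies and density in an essential way, and you should be prepared to justify it. Second, for part~(3) your reduction ``apply part~(1) in every quotient $G/N$'' addresses only quotients of $H$ of the form $HN/N$ with $N$ closed normal in $G$; to conclude total minimality of $H$ you also need that every Hausdorff quotient of $H$ arises this way, i.e., that $M\mapsto \overline{M}$ and $N\mapsto N\cap H$ give mutually inverse bijections between closed normal subgroups of $H$ and of $G$, and that the induced map $H/(N\cap H)\to G/N$ is a topological embedding. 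Both facts hold when $H$ is dense (normality of $\overline{M}$ uses density of $H$; the topological embedding uses that the quotient map $G\to G/N$ is open), but they deserve an explicit sentence.
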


By the Minimality  Criterion, a topological group is densely minimal if and only if it is minimal, and every dense subgroup is  essential. Similarly, by the Local Minimality Criterion, a topological group is densely locally minimal if and only if it is locally minimal, and every dense subgroup is locally  essential.

The following easy lemma is used in our main theorems.

\begin{lemma}\label{lem:DHM}
If a direct product is \D(L)M, then each of its factors is \D(L)M.	
\end{lemma}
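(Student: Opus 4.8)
The plan is to reduce the lemma to the single claim that a topological direct factor of a (locally) minimal group is again (locally) minimal. Write the product as $G=\prod_{i\in I}G_i$, fix an index $j$, and put $R=\prod_{i\neq j}G_i$, so that $G=G_j\times R$ as topological groups. Given any dense subgroup $D$ of $G_j$, the subgroup $H:=D\times R$ is dense in $G$, because the closure of a product is the product of the closures, whence $\overline{H}=\overline{D}\times R=G_j\times R=G$. Since $G$ is densely (locally) minimal and $H$ is dense in $G$, the subgroup $H$ is (locally) minimal. As $D$ is a direct factor of $H$, the claim above will yield that $D$ is (locally) minimal; and as $D$ ranges over all dense subgroups of $G_j$, this is exactly the assertion that $G_j$ is densely (locally) minimal.

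First I would settle the \emph{minimal} case of the claim by lifting a coarsening of the factor to the whole product. Let $\tau_D$ and $\tau_R$ denote the topologies of $D$ and $R$. If $\sigma\subsetneq\tau_D$ were a strictly coarser Hausdorff group topology on $D$, then $\sigma\times\tau_R$ would be a Hausdorff group topology on $H$ coarser than $\tau_D\times\tau_R$, and strictly so: any $O\in\tau_D\setminus\sigma$ gives $O\times R\in(\tau_D\times\tau_R)\setminus(\sigma\times\tau_R)$, since every $(\sigma\times\tau_R)$-open set is a union of products $S\times T$ with $S\in\sigma$, and were $O\times R$ such a union then $O$ itself would be $\sigma$-open. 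This contradicts the minimality of $H$, so $D$ is minimal. Applied to every dense $D$, this already proves the \DM\ part of the lemma.

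For the \DLM\ part the same lifting is not available, and this is the crux. I would instead use the reformulation recorded after Fact \ref{Crit}: $G_j$ is \DLM\ precisely when it is locally minimal and every dense subgroup of it is locally essential. The local essentiality transfers cleanly from $G$ to the factor. By that reformulation $H$ is locally essential in $G$, so there is a neighborhood $\Omega\supseteq V\times W$ of $e$ in $G=G_j\times R$ witnessing it; for every nontrivial closed normal subgroup $M$ of $G_j$ contained in $V$, the subgroup $M\times\{e\}$ is a nontrivial closed normal subgroup of $G$ contained in $\Omega$, so $H\cap(M\times\{e\})=(D\cap M)\times\{e\}$ is nontrivial, giving $D\cap M\neq\{e\}$; thus $V$ witnesses that $D$ is locally essential in $G_j$ (the same computation with $\Omega=G$ handles essentiality in the minimal case). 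The remaining, and genuinely delicate, point is that $G_j$ is \emph{itself} locally minimal, i.e. that a direct factor of a locally minimal group is locally minimal. The difficulty is exactly the failure of the naive lifting: from a strictly coarser topology $\sigma$ on $G_j$ keeping a neighborhood $V$ open, the product $\sigma\times\tau_R$ keeps only product neighborhoods $V\times W$ open, whereas the neighborhood witnessing local minimality of $G$ need not be of product form, so $\sigma\times\tau_R$ does not obviously contradict it. I expect to resolve this either by passing to the completion $\tilde G=\tilde G_j\times\tilde R$ and combining local minimality of $\tilde G$ with the Local Minimality Criterion (Fact \ref{Crit}(2)) and the local essentiality transfer above, or by invoking the stability of local minimality under direct products from \cite{ACDD}. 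Granting local minimality of $G_j$, the Local Minimality Criterion together with the established local essentiality of every dense subgroup completes the proof.
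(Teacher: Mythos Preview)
Your argument follows the paper's proof exactly through the key construction: given a dense $D\le G_j$, form $H=D\times R$, observe it is dense in $G$, and conclude $H$ is (locally) minimal. The paper then finishes in one line, ``Thus $H_i$ is (locally) minimal itself'', i.e.\ it simply uses that a direct factor of a (locally) minimal group is (locally) minimal.

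For the minimal case you reprove that directly, which is fine. For the locally minimal case you flag the direct-factor step as ``genuinely delicate'' and route around it via local essentiality and the Local Minimality Criterion; but your detour still ends at the very same claim (you must show $G_j$ itself is locally minimal), so nothing is gained. In fact the direct lifting does work, once you recall the standard neighbourhood formulation of local minimality (as in \cite{MP,ACDD}): if $V$ witnesses local minimality of $H$, then so does any smaller neighbourhood of $e$, in particular a basic box $V_D\times V_R\subseteq V$. Now if $\sigma\subsetneq\tau_D$ is a strictly coarser Hausdorff group topology on $D$ in which $V_D$ is still a neighbourhood of $e$, then $\sigma\times\tau_R$ is strictly coarser than $\tau_D\times\tau_R$ (your own set-theoretic check applies) and $V_D\times V_R$ is a $(\sigma\times\tau_R)$-neighbourhood of $e$, contradicting the witness. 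Hence $V_D$ witnesses local minimality of $D$, and no appeal to completions or to further results from \cite{ACDD} is needed. With this observation your proof collapses to the paper's.
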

\begin{proof}
Let $K_i$, $i \in I$, be topological groups, and assume $K= \displaystyle{\prod}_{i\in I}K_{i}$ is \D(L)M.
Fix an index $i\in I$, and let $H_{i}$ be a dense subgroup of $K_{i}$. Then $H= H_{i} \times \displaystyle{\prod}_{i\neq j\in I}K_{j}$ is a dense subgroup of $K$, so $H$ is (locally) minimal. Thus $H_{i}$ is (locally) minimal itself.
\end{proof}

Using Example \ref{ex:DHM:groups}(2)-(3) and Lemma \ref{lem:DHM} we obtain the following.
\begin{corollary}\label{cor:product} Let $G=\displaystyle{\prod}_p\Z_{p}^{\kappa_{p}}$ be infinite, where $\kappa_p$ is a cardinal for every prime $p$. Then $G$ is densely locally minimal if and only if $G=\Z_p$ for some prime $p$.
\end{corollary}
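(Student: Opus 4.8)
The statement is an equivalence; I would prove the two implications separately, the sufficiency being immediate and the necessity reducing to the two prohibited configurations recorded in Example~\ref{ex:DHM:groups}. For sufficiency, suppose $G=\Z_p$ for some prime $p$. By Prodanov's theorem (Fact~\ref{fac:HMZ}), $\Z_p$ is \HM. Since a minimal group is locally minimal (take $V=G$ in the definition of local minimality), every subgroup of $\Z_p$ is locally minimal, so $\Z_p$ is \HLM; as a dense subgroup is in particular a subgroup, \HLM\ yields \DLM, whence $G$ is \DLM.

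For the necessity I would argue contrapositively. Assume $G\neq\Z_p$ for every prime $p$. Since $G=\prod_p\Z_p^{\kappa_p}$ is infinite yet is not a single factor $\Z_p$, counting the factors with multiplicity shows that at least one of the following holds: (i) there are distinct primes $p\neq q$ with $\kappa_p\geq 1$ and $\kappa_q\geq 1$; or (ii) some prime $p$ has $\kappa_p\geq 2$. Regrouping the defining product, in case (i) we may write $G\cong(\Z_p\times\Z_q)\times R$ and in case (ii) $G\cong\Z_p^2\times R$, where $R$ collects the remaining factors. If $G$ were \DLM, then by Lemma~\ref{lem:DHM}, applied to this two-factor decomposition, the first factor would be \DLM\ as well. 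But $\Z_p\times\Z_q$ is not \DLM\ by Example~\ref{ex:DHM:groups}(3), and $\Z_p^2$ is not \DLM\ by Example~\ref{ex:DHM:groups}(2); either way we reach a contradiction. Hence $G$ being \DLM\ forces $G=\Z_p$.

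Essentially all of the mathematical content lies in the two cited examples, so the only points that require care are two bookkeeping matters, and I expect these to be the only (minor) obstacles. First, I must confirm that the dichotomy (i)--(ii) is exhaustive once $G$ is infinite and differs from every $\Z_p$; this is clear, since $G\cong\Z_p$ is precisely the case where the factors form a singleton multiset. Second, I must verify that Lemma~\ref{lem:DHM} still applies after regrouping: the lemma is stated for a product over an arbitrary index set, and any partition of that set expresses $G$ as a product of subproducts without altering the topological group, so treating $\Z_p\times\Z_q$ (resp.\ $\Z_p^2$) as a single factor $K_0$ of $K_0\times R$ is legitimate. Beyond this I do not anticipate any genuine difficulty.
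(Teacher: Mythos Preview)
Your proof is correct and follows exactly the approach the paper intends: the corollary is stated immediately after Example~\ref{ex:DHM:groups} and Lemma~\ref{lem:DHM} with only the sentence ``Using Example~\ref{ex:DHM:groups}(2)--(3) and Lemma~\ref{lem:DHM} we obtain the following,'' and your argument simply spells out that reduction (together with the easy sufficiency via Prodanov's theorem). The bookkeeping points you flag---exhaustiveness of the dichotomy and legitimacy of regrouping before applying Lemma~\ref{lem:DHM}---are handled correctly.
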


The following lemma generalizes \cite[Lemma 3.3]{DHXX}, in which only the case when $k$ is a prime number and $\alpha = \omega$ is considered.
Recall that a topological group is NSS if it has a neighborhood of the identity that does not contain  non-trivial subgroups.
\begin{lemma}\label{direct:sum:cyclic}
	If $k$ is a positive integer, and $\alpha$ is an infinite cardinal, then the compact group  $\Z(k)^{\alpha}$ is not densely locally minimal. In particular, 	an infinite compact bounded abelian group is not densely locally minimal.
\end{lemma}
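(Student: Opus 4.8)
The plan is to use the Local Minimality Criterion (Fact \ref{Crit}(2)) together with the fact that the compact group $\Z(k)^{\alpha}$ is locally minimal (being locally compact). Thus it suffices to exhibit a \emph{dense} subgroup of $\Z(k)^{\alpha}$ that fails to be \emph{locally essential}; such a subgroup is then dense but not locally minimal, witnessing that $\Z(k)^{\alpha}$ is not \DLM. Here I assume $k\geq 2$, the case $k=1$ being degenerate since $\Z(1)^{\alpha}$ is trivial.

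Concretely, I would take $H=\Z(k)^{(\alpha)}$, the subgroup of all finitely supported elements sitting inside the product. Density is immediate: a basic neighborhood of a point $x$ constrains only finitely many coordinates, and the finitely supported element agreeing with $x$ on those coordinates lies in $H$. For the failure of local essentiality, recall that a base of neighborhoods of $0$ is given by the open subgroups $V_{F}=\{x: x_{i}=0 \text{ for all } i\in F\}$ with $F\subseteq\alpha$ finite. Given an arbitrary neighborhood $V$, I choose such an $F$ with $V_{F}\subseteq V$, fix a non-zero $b\in\Z(k)$ (this is the only place $k\geq 2$ is used), pick a countably infinite $S\subseteq\alpha\setminus F$, and let $v$ be the element equal to $b$ on $S$ and $0$ elsewhere. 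Then $N:=\langle v\rangle$ is a non-trivial finite (hence closed) subgroup contained in $V_{F}\subseteq V$, while every non-zero multiple of $v$ has infinite support $S$ and therefore lies outside $H$; thus $H\cap N=\{e\}$. Since $V$ was arbitrary, $H$ is not locally essential. This is precisely where the failure of NSS is exploited: arbitrarily small neighborhoods of $\Z(k)^{\alpha}$ contain non-trivial closed subgroups, and the finitely supported $H$ is tailored to miss those arising from infinitely supported vectors.

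For the second assertion, I would reduce an arbitrary infinite compact bounded abelian group $G$ to the case just treated. By Pontryagin duality, $\widehat{G}$ is discrete and bounded, so by Pr\"ufer's theorem it is a direct sum of finite cyclic groups whose orders divide the bound $n$. Collecting the summands according to their (finitely many) possible orders and dualizing yields a topological isomorphism $G\cong\prod_{2\leq d\mid n}\Z(d)^{\alpha_{d}}$. Since $G$ is infinite while each factor is finite, some $\alpha_{d_{0}}$ with $d_{0}\geq 2$ must be infinite, so $\Z(d_{0})^{\alpha_{d_{0}}}$ is a direct factor of $G$. By the first part this factor is not \DLM, whence $G$ is not \DLM\ by the contrapositive of Lemma \ref{lem:DHM}.

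The argument is short, and the genuine content lies in the verification of the second paragraph rather than in any deep obstacle. The only mild subtleties I would be careful about are stating the negation of local essentiality correctly (namely: for \emph{every} neighborhood $V$ there exists a non-trivial closed $N\subseteq V$ with $H\cap N=\{e\}$), choosing $N$ inside the prescribed $V_{F}$, and flagging the harmless exclusion of the trivial case $k=1$.
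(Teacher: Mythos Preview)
Your proof is correct and follows essentially the same approach as the paper: both take the direct sum $H=\Z(k)^{(\alpha)}$ as the dense subgroup witnessing the failure of \DLM, and the second assertion is handled identically via the structure theorem for bounded abelian groups together with Lemma~\ref{lem:DHM}.

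The only difference is in how the failure of local essentiality of $H$ is verified. You directly negate the definition: for each basic open subgroup $V_F$ you produce a single cyclic subgroup $\langle v\rangle\subseteq V_F$ supported on an infinite set disjoint from $F$, so that $H\cap\langle v\rangle=\{0\}$. The paper instead partitions $\alpha$ into countably many infinite pieces $P_i$, takes $D_i$ to be the diagonal of $\Z(k)^{P_i}$, and observes that $\prod_i D_i$ is a closed non-NSS subgroup of $\Z(k)^{\alpha}$ meeting $H$ trivially; it then appeals to \cite[Lemma~2.10]{DHXX} to conclude that $H$ is not locally minimal. Your argument is more elementary and self-contained, avoiding the external reference; the paper's construction yields the slightly stronger (but unneeded here) conclusion that a single fixed closed subgroup witnesses the failure for all neighborhoods at once.
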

\begin{proof}
	We show that the direct sum $\Z(k)^{(\alpha)}$  is not locally  minimal. Consider any countable partition of $\alpha$ into infinite subsets, $\alpha = \displaystyle{\bigcup}_{i \in \N}P_i$ with infinite $P_i$'s. For each $i\in \N$, let $D_i$ be the diagonal of $\Z(k)^{P_i}$. Then $\displaystyle{\prod}_{i\in \N} D_i$ is a closed subgroup of $\displaystyle{\prod}_{i\in \N}\Z(k)^{P_i}=\Z(k)^{\alpha}$ that is not NSS and that trivially meets $\Z(k)^{(\alpha)}$.  Hence $\Z(k)^{(\alpha)}$ fails to be locally minimal by  \cite[Lemma 2.10]{DHXX}.
	
	For the second assertion, let $G$ be an infinite bounded abelian group. Then $G$ has the form $G= \displaystyle{\prod} _{i=1}^{n} \Z(k_{i})^{\alpha_{i}}$ for positive integers $n$, $k_1, \ldots, k_n$, and cardinal numbers $\alpha_1, \ldots, \alpha_n$,
	with, say, $\alpha_1$ infinite. Then $\Z(k_{1})^{\alpha_{1}}$ is not \DLM\ by the first part of the proof, so that $G$ is not \DLM\ by Lemma \ref{lem:DHM}.
\end{proof}

\section{Proof of Theorem A}\label{sec:thmA}

In this section we prove Theorem A and Theorem B, while Theorem C is proved in Section \ref{sec:vs}.

According to \cite{DAGB},   a topological abelian group $G$ is \emph{w-divisible} if $w(G)= w(mG)\geq \omega$ for every  $m\in\N_{+}$.
One of the key ingredients in the proof of Theorem A and Theorem C is the following splitting theorem for compact abelian groups.
\begin{fact} \label{fac:SPL}\cite[Theorem 1.3]{DAG2}
	Let $K$ be a compact abelian group.
	Then $K$ splits topologically in a direct product $K=K_{tor}\times K_{d}$,
	where $K_{tor}$ is a compact bounded abelian group, while the compact abelian group $K_{d}$ is w-divisible.
\end{fact}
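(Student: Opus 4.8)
The plan is to pass to Pontryagin duality and reduce the statement to a purely algebraic splitting theorem for discrete abelian groups. Write $X = \widehat{K}$, a discrete abelian group with $w(K) = |X|$, and record the three translations that drive the reduction. First, $K$ is bounded of exponent $n$ if and only if $X$ is bounded of exponent $n$, since $\widehat{nK}\cong nX$ and so $nK$ is trivial exactly when $nX$ is. Second, for every $m\in\N_{+}$ the subgroup $mK$ is closed (being compact), its annihilator in $X$ is $X[m]$, hence $\widehat{mK}\cong X/X[m]\cong mX$ and therefore $w(mK)=|mX|$. Third, a topological splitting $K = A\times B$ corresponds exactly to an algebraic direct sum $X = \widehat{A}\oplus\widehat{B}$, and $\widehat{\widehat{K}}\cong K$. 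Combining the second translation with the definition, $K$ is w-divisible precisely when $|mX| = |X|\ge\omega$ for every $m\in\N_{+}$. After this reduction the goal becomes: decompose $X = B\oplus D$ where $B$ is bounded and $D$ satisfies $|mD| = |D|\ge\omega$ for all $m\in\N_{+}$ (or $D$ is trivial); dualizing back then yields $K_{tor} = \widehat{B}$ compact bounded and $K_{d} = \widehat{D}$ w-divisible.

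To locate the \emph{stable} cardinal, I would examine the set of cardinals $\{\,|mX| : m\in\N_{+}\,\}$. Since $m'X\subseteq mX$ whenever $m\mid m'$, this set is non-increasing along divisibility, and being a set of cardinals it attains a least element $\mu = |m_0 X|$ at some $m_0$. Then for every $k$ one has $|k m_0 X|\le |m_0 X| = \mu$ and $\ge\mu$ by minimality, so $|k m_0 X| = \mu$; that is, $m_0 X$ is already cardinality-stable. If $\mu$ is finite, then $m_0 X$ is finite while $X/m_0 X$ has exponent $m_0$, which forces $X$ itself to be bounded; in this degenerate case $K = K_{tor}$ and $D$ is trivial.

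The substantive case is $\mu\ge\omega$, and producing an honest \emph{direct} summand is the main obstacle: the subgroup $m_0 X$ is cardinality-stable and $X/m_0 X$ is bounded, but $m_0 X$ need not be a direct summand of $X$. To overcome this I would invoke the structure theory of bounded groups from \cite{Fuc}: by Pr\"ufer's theorem every bounded abelian group is a direct sum of cyclic groups, and by Kulikov's theorem a bounded \emph{pure} subgroup is a direct summand. The aim is to extract a bounded pure subgroup $B$ carrying precisely the prime-power directions in which multiplication strictly lowers the cardinality, so that the complementary summand absorbs none of that drop. Concretely I would assemble $B$ from a compatible system of cyclic summands, one prime and exponent at a time, wherever the corresponding layer contributes only boundedly, take $B$ to be their span (bounded by construction), verify that $B$ is pure in $X$, and apply Kulikov to obtain $X = B\oplus D$; a Zorn's-lemma argument choosing a maximal bounded pure subgroup is an alternative route to the same $B$.

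Finally I would verify the complement $D = X/B$. Since $B$ is bounded, $mB = 0$ for all $m$ divisible by its exponent, whence $|mX| = |mD|$ for such $m$; writing $mX = mB\oplus mD$ in general gives $|mX| = \max(|mB|,|mD|)$, and one checks that $\min_m |mX| = |D|$, so $|D| = \mu$ and $|mD| = \mu = |D|\ge\omega$ for every $m$, i.e.\ $D$ is w-divisible in the dual sense. Dualizing $X = B\oplus D$ through the third translation returns the desired topological splitting $K = K_{tor}\times K_{d}$. The delicate point throughout is the purity verification in the construction of $B$: it is what guarantees that the bounded part splits off cleanly and leaves a genuinely w-divisible complement, whereas the remaining steps are the standard duality bookkeeping.
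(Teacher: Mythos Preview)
The paper does not prove this statement: it is recorded as a Fact, cited from \cite[Theorem 1.3]{DAG2}, and the only further comment is the sentence immediately following it, noting that the dual factorization theorem for discrete abelian groups is proved in \cite{GaMa}. There is therefore no proof in the paper against which to compare your attempt.

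That said, your reduction via Pontryagin duality to a splitting $X=\widehat K=B\oplus D$ with $B$ bounded and $|mD|=|D|\ge\omega$ for all $m$ is precisely the route the paper's remark about \cite{GaMa} points to, and your dictionary entries (boundedness passes to the dual, $w(mK)=|mX|$, topological splittings of $K$ correspond to direct-sum decompositions of $X$) are correct. The only real gap in your sketch is the construction of $B$ itself. Your primary description---``assemble $B$ from a compatible system of cyclic summands \ldots\ wherever the corresponding layer contributes only boundedly'' and then invoke Kulikov---is the right spirit but is not spelled out enough to see that the resulting complement $D$ is actually cardinality-stable. Your proposed alternative via Zorn's lemma on bounded pure subgroups does not work as stated: an ascending union of bounded subgroups need not be bounded (e.g.\ $\bigoplus_{n}\Z(p^n)$ as the union of its finite partial sums), so chains in that poset need not have upper bounds and Zorn does not apply. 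In short, the overall strategy is the correct one and agrees with the literature the paper cites, but the one substantive step---producing the bounded pure summand $B$ with a w-divisible complement---is exactly where the work lies and remains to be filled in.
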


The dual of the above result, namely a factorization theorem for discrete abelian groups, is proved in \cite{GaMa}.

Recall that a topological group is {\em linearly topologized} if it has a local base at the identity consisting of open subgroups (the term \emph{non-archimedean} is also used by some authors).
For example, profinite groups are linearly topologized, and indeed in Theorem A  we apply the following fact to profinite groups.

\begin{fact}\cite[Proposition 3.4(c)]{ACDD}\label{fac:lintop}
	Let $G$ be a topological abelian group and let $H$ be a subgroup of $G$. If $H$ is locally essential in $G$, then for every closed and linearly topologized subgroup $N$ of $G$,  there exists an open subgroup $V$ of $N$ such that $H\cap V$ is essential in $V$.
\end{fact}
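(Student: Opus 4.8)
The statement is purely about the interplay between local essentiality in $G$ and the linear topology of $N$, so the plan is to transfer a single witnessing neighborhood for local essentiality into an open subgroup of $N$, and then check essentiality directly from the definitions. First I would invoke local essentiality of $H$ in $G$ to fix a neighborhood $V_0$ of $e$ in $G$ such that $H\cap M\neq\{e\}$ for every non-trivial closed subgroup $M$ of $G$ with $M\subseteq V_0$; here I use that $G$ is abelian, so that the word \emph{normal} in the definition of locally essential imposes no restriction and every subgroup qualifies.

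Next I would exploit that $N$ is linearly topologized: the open subgroups of $N$ form a local base at $e$, so since $V_0\cap N$ is a neighborhood of $e$ in $N$, there is an open subgroup $V$ of $N$ with $V\subseteq V_0\cap N\subseteq V_0$. This $V$ is the candidate open subgroup of $N$ we seek. The crucial topological observation is that $V$ is closed in $G$: being open in $N$ it is also closed in $N$, and $N$ is closed in $G$ by hypothesis, whence $V$ is closed in $G$.

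It then remains to verify that $H\cap V$ is essential in $V$. Let $M$ be any non-trivial closed subgroup of $V$. Since $V$ is closed in $G$, the subgroup $M$ is closed in $G$ as well, and $M\subseteq V\subseteq V_0$. Because $G$ is abelian, $M$ is a non-trivial closed normal subgroup of $G$ contained in $V_0$, so the choice of $V_0$ gives $H\cap M\neq\{e\}$. As $M\subseteq V$ we have $(H\cap V)\cap M=H\cap M\neq\{e\}$, which is precisely the essentiality of $H\cap V$ in $V$.

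The argument is short, and its only genuinely delicate point is the closedness bookkeeping: one must ensure that a subgroup $M$ which is closed merely in the relative topology of $V$ is in fact closed in $G$, since local essentiality speaks only about subgroups closed in $G$. This is guaranteed exactly by the two hypotheses that $N$ is closed in $G$ and (through the linear topology) that $V$ may be taken open, hence closed, in $N$. I would therefore expect no further obstacle: the combination of abelianness, which removes the normality issues, and the open-subgroup base, which supplies $V$, makes the passage from $G$ down to $V$ essentially automatic.
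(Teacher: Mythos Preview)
The paper does not prove this statement; it records it as a Fact, citing \cite[Proposition 3.4(c)]{ACDD}, and uses it as a black box. Your argument is a correct self-contained proof: you correctly extract the witnessing neighborhood $V_0$ from local essentiality, use the linear topology of $N$ to pass to an open subgroup $V\subseteq V_0\cap N$, and then the closedness chain $V$ open (hence closed) in $N$, $N$ closed in $G$, ensures that any closed subgroup of $V$ is closed in $G$, so local essentiality applies. The abelianness hypothesis is used exactly where you say, to make every subgroup normal. This is the natural direct verification and is presumably what the cited reference does as well.
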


The following results which appear in \cite{DAG} are frequently used in the sequel.
\begin{fact} \label{fac:WDC}  \
	\ben
	\item	If $K$ is a w-divisible compact abelian group, then $K$
	admits a dense free abelian subgroup $F$ with $|F| = d(K)$.
	\item Let $N$ be a quotient of $G$ with $h:G\twoheadrightarrow N$ the canonical projection. If $H$ is a subgroup of $G$ such that $h(H)$ is dense in $N$ and $H$ contains a dense subgroup of $\ker h$, then $H$ is dense in $G.$
	\een
\end{fact}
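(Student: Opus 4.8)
The plan is to treat the two parts separately; part (2) is an elementary density-transfer argument, while part (1) is the substantive one and is where I expect the real work.

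For part (2), write $L=\ker h$, a closed normal subgroup of $G$ (as $h$ is continuous and $N$ is Hausdorff), and show $\overline{H}=G$. Since $H$ contains a subgroup $D$ that is dense in $L$ and $L$ is closed in $G$, the closure of $D$ taken in $G$ is already $L$; hence $\overline{H}\supseteq L$, so $\overline{H}$ is $L$-saturated, i.e.\ $\overline{H}=h^{-1}(h(\overline{H}))$. Because $h$ is an open map and $\overline{H}$ is closed and saturated, its complement is open and saturated, so $h(\overline{H})$ is a closed subgroup of $N$. But $h(\overline{H})\supseteq h(H)$, which is dense in $N$, whence $h(\overline{H})=N$ and therefore $\overline{H}=h^{-1}(N)=G$.

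For part (1), set $\kappa=d(K)$; since $K$ is w-divisible it is infinite, so $\kappa\geq\omega$, and I must produce $\kappa$ elements of $K$ that are simultaneously $\Z$-independent (so they generate a free abelian group) and generate a dense subgroup. The engine is the following consequence of w-divisibility: for every $n\geq 1$ one has $K/K[n]\cong nK$ with $w(nK)=w(K)\geq\omega$, so $nK$ is infinite, $K[n]$ has infinite index, and hence every coset of $K[n]$ is closed with empty interior. Consequently, for a countable subgroup $F\leq K$ the rational hull $\mathrm{Dep}(F)=\{x\in K:\ nx\in F\text{ for some }n\geq1\}$ is a countable union of such cosets, hence meager; as $K$ is a compact (thus Baire) space, $\mathrm{Dep}(F)$ omits a point of every nonempty open set. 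I would then run a transfinite recursion of length $\kappa$: having chosen an independent family $\{x_\beta:\beta<\alpha\}$, I pick $x_\alpha$ inside a prescribed nonempty open set but outside $\mathrm{Dep}(\langle x_\beta:\beta<\alpha\rangle)$, which keeps the family independent. Density is fed in through the relation $d(K)=\log w(K)$ (equivalently, the Hewitt--Marczewski--Pondiczery theorem): a dense subset of size $\kappa$ exists, and the guiding open sets are chosen so that meeting them forces the resulting subgroup to be dense. For the cardinality bound one uses that $K$ is not bounded (a compact bounded group would satisfy $nK=\{0\}$ for some $n$, contradicting w-divisibility), whence the torsion-free rank of $K$ is at least $\kappa$ (in fact $2^{w(K)}$), leaving room to reach size exactly $\kappa$.

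The main obstacle is reconciling density with freeness at the minimal cardinality $\kappa=d(K)$. When $w(K)=\kappa$ this is smooth, but when $w(K)>\kappa$ one cannot enumerate a base or even a $\pi$-base in $\kappa$ steps (for topological groups $\pi w=w$), so density cannot be secured by meeting open sets one at a time; it must be extracted from a preexisting dense set of size $\kappa$ and made compatible with independence. A second, related difficulty is that at an uncountable stage $\alpha<\kappa$ the set $\mathrm{Dep}(\langle x_\beta:\beta<\alpha\rangle)$ is a union of $|\alpha|$ many nowhere dense cosets and need no longer be meager, so the clean Baire step must be replaced by an argument guaranteeing that this rational hull stays non-dense (equivalently, that its closure is a proper subgroup) at every stage. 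This is exactly where w-divisibility is indispensable: it forces the torsion part to be topologically negligible and the free rank to be large, and it is this structural input (together with the splitting theorem, Fact \ref{fac:SPL}) that lets one organize the recursion so that both the independence and the density conditions survive into the limit.
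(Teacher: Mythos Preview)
This statement is recorded in the paper as a fact taken from \cite{DAG}; the paper gives no proof of its own, so there is nothing to compare your attempt against directly.

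Your argument for part~(2) is correct and self-contained: once $\overline{H}\supseteq \ker h$, the closure is saturated, its image under the open map $h$ is closed and contains the dense set $h(H)$, hence $\overline{H}=G$.

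For part~(1) you have the right shape, and in the metrizable case $w(K)=\omega=d(K)$ your Baire recursion genuinely works: enumerate a countable base $\{U_n\}$ and at stage $n$ choose $x_n\in U_n\setminus\mathrm{Dep}(\langle x_0,\dots,x_{n-1}\rangle)$, which is possible since the hull of a finitely generated subgroup is meager. Beyond the metrizable case, however, you correctly identify the two essential obstructions and then do not overcome them. The density problem (no $\pi$-base of size $d(K)$ when $d(K)<w(K)$, since $\pi w=w$ for topological groups) is deferred to ``extracting density from a preexisting dense set of size $\kappa$ and making it compatible with independence'', with no mechanism supplied; note that placing $x_\alpha$ in a neighbourhood of a point $d_\alpha$ of a fixed dense set of size $\kappa$ does \emph{not} make $\{x_\alpha\}$ dense. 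The independence problem at uncountable stages (the rational hull is a union of $|\alpha|$ nowhere-dense cosets and need not be meager) is deferred to ``w-divisibility together with the splitting theorem'', again without a mechanism; even your proposed substitute, that the closure of the hull be a proper subgroup, would only produce a point outside that closure, not one inside a prescribed open set. The closing sentences are an agenda, not an argument. As written, part~(1) is a sound proof in the metrizable case and an honest inventory of difficulties otherwise, but it is not a proof of the general statement; the construction in \cite{DAG} supplies the missing work.
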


The following lemma is used in the subsequent Proposition \ref{new:W:L}.
\begin{lemma}\label{new:lemma:W:L}
Let $G$ be 
a topological abelian  group, 
and $\kappa$ be a cardinal such that:
\begin{enumerate}
\item $d(G) \leq \kappa \cdot \omega$;
\item $G$ has a subgroup $R \cong \Z_p^\kappa$ for some prime number $p$.
\end{enumerate}
If $G$ is densely locally minimal, then $\kappa$ is finite. In particular, $G$ is separable.
\end{lemma}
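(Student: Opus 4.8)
The plan is to argue by contradiction: assume $\kappa$ is infinite and produce a dense subgroup of $G$ that is not locally minimal, contradicting the assumption that $G$ is \DLM. Since $G$ is \DLM, the reformulation of the Local Minimality Criterion noted after Fact \ref{Crit} tells us that $G$ is locally minimal and that \emph{every} dense subgroup of $G$ is locally essential; so it suffices to exhibit one dense subgroup $H$ of $G$ that fails to be locally essential, as then $H$ is not locally minimal by Fact \ref{Crit}(2). As $\kappa$ is infinite we have $d(G)\le \kappa\cdot\omega=\kappa$, so I would fix a dense subset $S$ of $G$ with $|S|\le\kappa$ and set $H=\langle S\rangle$; then $H$ is dense and $|H|\le\kappa$. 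Note also that $R\cong\Z_p^\kappa$ is compact, hence closed in $G$.

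Next I would exploit the abundance of closed copies of $\Z_p$ inside $R\cong\Z_p^\kappa$. For $v\in R\setminus\{0\}$ write $\Z_p v=\overline{\langle v\rangle}$ for the associated \emph{line}, a compact (hence closed and, by commutativity, normal) subgroup of $G$ isomorphic to $\Z_p$. Taking $v_A=\sum_{\xi\in A}e_\xi$ for the nonempty subsets $A\subseteq\kappa$ produces $2^\kappa$ pairwise distinct lines, since $\Z_p v_A=\Z_p v_B$ forces $A=B$. On the other hand, every nonzero $h\in R$ lies on only finitely many lines: each line through $h$ is contained in the one–dimensional $\Q_p$–subspace $\Q_p h$ of $\Q_p^\kappa$, and the lines inside $R\cap\Q_p h$ that contain $h$ are exactly $p^{-j}\Z_p h$ for $0\le j\le m$, where $p^m$ is the largest power of $p$ dividing $h$ in $R$. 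Consequently $H\cap R$, having at most $\kappa$ elements, meets at most $\kappa\cdot\omega=\kappa$ lines, while there are $2^\kappa>\kappa$ lines in all; so some line $N:=\Z_p v$ satisfies $N\cap H=N\cap(H\cap R)=\{0\}$.

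Finally I would convert this avoided line into a witness for the failure of local essentiality. The subgroups $N_n:=p^nN=\Z_p(p^nv)$ are nontrivial closed normal subgroups of $G$ with $H\cap N_n\subseteq H\cap N=\{0\}$. Given any neighborhood $V$ of $0$ in $G$, the trace $V\cap R$ contains a basic neighborhood of $R\cong\Z_p^\kappa$ restricting only finitely many coordinates, and since multiplication by $p^n$ contracts the compact group $N$ we get $N_n\subseteq V$ for all large $n$. Thus for every $V$ there is a nontrivial closed normal subgroup of $G$ contained in $V$ and meeting $H$ trivially, i.e. $H$ is not locally essential. Hence $H$ is a dense non–locally–minimal subgroup of $G$, contradicting that $G$ is \DLM. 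Therefore $\kappa$ is finite, and then $d(G)\le\kappa\cdot\omega=\omega$, so $G$ is separable.

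The two steps I expect to carry the weight are the double count in the middle paragraph and the cofinality check at the end. The count is the real engine: it uses hypotheses (1) and (2) together through the comparison $|H|\le\kappa<2^\kappa=\#\{\text{lines}\}$, and it rests on the (easy but essential) observation that a fixed nonzero $h$ sits on only finitely many lines, obtained via the $\Q_p$–span $\Q_p h$. The subtlety to get right in the last step is that the contracting tower $N_n$ must be cofinal in the neighborhood filter of $G$ itself, not merely of $R$; this is exactly what the ``only finitely many coordinates restricted'' description of neighborhoods of $\Z_p^\kappa$, applied inside $V\cap R$, delivers.
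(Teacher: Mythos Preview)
Your argument is correct. The overall shape---a counting contradiction between $|H|\le\kappa$ and $2^\kappa$ closed copies of $\Z_p$ inside $R$---matches the paper's, but the execution differs. The paper invokes Fact~\ref{fac:lintop} to pass from local essentiality of $H$ in $G$ to genuine essentiality of $H\cap U$ in some open subgroup $U\cong\Z_p^\kappa$ of $R$, and then quotes from \cite{DPS} a family of $2^\kappa$ closed copies of $\Z_p$ in $U$ with pairwise trivial intersection; essentiality forces $H\cap U$ to meet them all in distinct nonzero points, giving $|H\cap U|\ge 2^\kappa$. You bypass both black boxes: instead of reducing to essentiality you work directly with local essentiality, and instead of a pairwise-disjoint family you use the elementary observation that any nonzero $h\in R$ lies on only finitely many lines $\Z_p v$, together with the explicit tower $N_n=p^nN$ to land inside an arbitrary neighbourhood $V$. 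What your route buys is self-containment (no appeal to Fact~\ref{fac:lintop} or to \cite{DPS}); what the paper's route buys is brevity once those tools are on the table, since the cofinality check and the ``finitely many lines through a point'' count become unnecessary. Both are clean.
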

\begin{proof}
%
Assume by contradiction $\kappa$ to be infinite, and let $H$ be a dense subgroup of $G$ with $|H| = d(G) \leq \kappa \cdot \omega = \kappa$. Then $H$ is locally minimal, so it is locally essential in $G$ by the Local Minimality Criterion.  Using  Fact \ref{fac:lintop}, we find an open subgroup $U$ of $R$ (necessarily isomorphic to $\Z_p^\kappa$), such that $H \cap U$ is essential  in $U$ and clearly $|H \cap U|\leq |H|\leq \kappa$.

As $\kappa$ is infinite, the group $\Z_p^\kappa$ has a family of size $2^\kappa$ of closed subgroups isomorphic to $\Z_p$ with pairwise trivial intersection (see \cite{DPS}). The essentiality of $H \cap U$ in $U$ implies that $|H \cap U|\geq 2^\kappa$, a contradiction.
\end{proof}

In the next proposition, we consider some conditions guaranteeing that a quotient of a densely locally minimal compact abelian group is a finite power of $\T$.

\begin{proposition}\label{new:W:L}
	Let $G$ be a compact abelian group, $N$ be a closed subgroup of $G$, and $\kappa$ be a cardinal such that  $d(N) \leq \kappa \cdot \omega$ and $G/N \cong \T^{\kappa}$. If $G$ is densely locally minimal, then $\kappa$ is finite.
\end{proposition}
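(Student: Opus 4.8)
The plan is to reduce this statement to Lemma \ref{new:lemma:W:L} by producing, inside $G$, a closed subgroup isomorphic to $\Z_p^{\kappa}$ (for some prime $p$) while keeping the density of $G$ under control. The quotient $G/N \cong \T^{\kappa}$ supplies an abundance of copies of $\T$, and dualizing these should produce the required $p$-adic pieces; the main work is to lift this torus structure back into $G$ through the extension $N \hookrightarrow G \twoheadrightarrow G/N$.

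First I would bound the density of $G$. Since $G/N \cong \T^{\kappa}$ has $d(G/N) = \kappa \cdot \omega$, and $d(N) \leq \kappa \cdot \omega$ by hypothesis, the standard inequality $d(G) \leq d(N) \cdot d(G/N)$ for a closed subgroup and its quotient gives $d(G) \leq \kappa \cdot \omega$, which is precisely hypothesis (1) of Lemma \ref{new:lemma:W:L}. Next I would work on the Pontryagin dual side to locate the copy of $\Z_p^{\kappa}$ inside $G$. Assume, for contradiction, that $\kappa$ is infinite. The dual of the quotient map gives a closed embedding $\widehat{\T^{\kappa}} \cong \Z^{(\kappa)} \hookrightarrow \widehat{G}$; I want to promote this to an embedding of $\Z_p^{\kappa}$ into $G$ itself. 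The cleanest route is to pass to an appropriate closed subgroup of $G/N$: inside $\T^{\kappa}$ sits a closed subgroup isomorphic to $\Z_p^{\kappa}$ (for instance, taking in each coordinate the closure of a suitable cyclic subgroup of $\T$, which is a copy of $\Z_p$). Pulling this subgroup back along $G \twoheadrightarrow G/N$ yields a closed subgroup $L \leq G$ that is an extension of $N$ by $\Z_p^{\kappa}$, and since $\Z_p^{\kappa}$ is projective among profinite abelian groups (or using that $\Z_p^{\kappa}$ is a product of copies of $\Z_p$ and arguing with the Ext functor / splitting of such extensions), this extension splits, producing a closed subgroup $R \cong \Z_p^{\kappa}$ of $G$. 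This is exactly hypothesis (2) of Lemma \ref{new:lemma:W:L}.

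With both hypotheses of Lemma \ref{new:lemma:W:L} verified, that lemma forces $\kappa$ to be finite, contradicting the assumption that $\kappa$ is infinite, and the proof is complete. The main obstacle I anticipate is the splitting step: realizing $\Z_p^{\kappa}$ as an honest closed \emph{subgroup} of $G$ rather than merely as a subquotient. One must ensure that the lift of the copy of $\Z_p^{\kappa}$ from $G/N$ survives as a genuine closed complemented (or at least embedded) subgroup of $G$; this is where the structure theory of compact abelian groups, the splitting of profinite extensions by products of $\Z_p$, and the freeness/projectivity properties of $\Z_p^{\kappa}$ must be invoked carefully. An alternative that sidesteps some of this, if the direct splitting proves delicate, is to dualize the entire problem: translate the statement about $G$, $N$, and $\T^{\kappa}$ into one about discrete abelian groups $\widehat{G}$, a quotient $\widehat{N}$, and the free group $\Z^{(\kappa)}$, extract a direct summand there, and dualize back — but in either approach the crux is the same, namely controlling how the torus quotient is reflected as a $p$-adic subgroup upstairs.
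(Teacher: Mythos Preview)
Your proposal is correct and follows essentially the same route as the paper: verify the two hypotheses of Lemma~\ref{new:lemma:W:L} by bounding $d(G)$ via $d(N)$ and $d(G/N)$, and by pulling back a copy of $\Z_p^{\kappa}$ from $\T^{\kappa}$ and splitting the resulting extension. The paper resolves the splitting step you flag as the ``main obstacle'' precisely by the dualization you mention as an alternative (the dual $\widehat{A/N}\cong\Z(p^{\infty})^{(\kappa)}$ is divisible, hence a direct summand of $\widehat{A}$, and one dualizes back); note only that your phrase ``projective among profinite abelian groups'' is not quite the right category, since $N$ need not be profinite --- one needs projectivity of $\Z_p^{\kappa}$ in compact abelian groups, which is exactly what the divisibility-of-the-dual argument gives.
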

\begin{proof}
It suffices to check that $G$ satisfies conditions (1) and (2) of Lemma \ref{new:lemma:W:L}, when $\kappa$ is infinite. So let $k = \kappa \cdot \omega$ be infinite, and let $q:G\to G/N$ be the canonical projection.

It is easy to see that $\T^\kappa$ is w-divisible, so there exists a dense free subgroup $B$ of $\T^\kappa$ with $|B|\leq\kappa$ by Fact \ref{fac:WDC}(1).
As $B$ is free, there exists a free subgroup $B_1$ of $G$ such that $q(B_1) = B$ and $|B_1| = |B|\leq \kappa$. Let $D$ be a dense subgroup of $N$ of minimal cardinality $d(N) \leq \kappa$. Then $H= B_1+ D$ is dense in $G$ by Fact \ref{fac:WDC}(2), and $|H| \leq\kappa$. In particular, $d(G)\leq \kappa$.

We now prove that condition (2) is satisfied. First note that $ \T^\kappa$ contains $\Z_p^\kappa$ for every prime $p$. For the compact subgroup $A= q^{-1}(L)$ we have $A/N \cong L\cong \Z_p^\kappa$, that  is torsion-free. Being divisible, the Pontryagin dual $\widehat{A/N}$ is topologically isomorphic to a direct summand of $\widehat{A}$ (see \cite[Theorem 21.2]{Fuc}). So $\widehat{ \widehat{A/N} } \cong A/N \cong \Z_p^\kappa$ is (topologically isomorphic to) a subgroup of $A$, hence of $G$.
%
%
%
\end{proof}

The following result is folklore.
\begin{fact} \label{fac:CON}
	If $f: G\to H$ is a continuous surjective homomorphism of compact groups, then $f(c(G))= c(H)$.
\end{fact}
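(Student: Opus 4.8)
The plan is to prove the two inclusions $f(c(G)) \subseteq c(H)$ and $c(H) \subseteq f(c(G))$ separately. The first one is the easy half and needs only continuity: since $c(G)$ is connected and $f$ is continuous, the image $f(c(G))$ is a connected subset of $H$ containing $f(e) = e$, hence it is contained in the largest connected subset through the identity, namely $c(H)$. (Compactness is not used here.)

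For the reverse inclusion I would first record that $f(c(G))$ is a closed normal subgroup of $H$: it is compact (being the continuous image of the compact set $c(G)$), hence closed; it is a subgroup, as the image of a subgroup under a homomorphism; and it is normal because $c(G) \trianglelefteq G$ and $f$ is surjective. Thus the Hausdorff quotient $H/f(c(G))$ is again a compact group, and the desired inclusion $c(H) \subseteq f(c(G))$ is equivalent to the statement that the image of $c(H)$ under the canonical projection $H \to H/f(c(G))$ is trivial. Since that projection is continuous, this image is a connected subgroup containing the identity, so it suffices to prove that $H/f(c(G))$ is totally disconnected.

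To establish this I would reduce to a genuine quotient of $G$. As $f$ is a continuous surjective homomorphism of compact groups it is a topological quotient map, so $H \cong G/\ker f$, and under this identification $f(c(G))$ corresponds to $(c(G)\cdot \ker f)/\ker f$. The isomorphism theorems (valid topologically for compact groups) then give
\[
H/f(c(G)) \;\cong\; G/(c(G)\cdot \ker f),
\]
which, since $c(G) \subseteq c(G)\cdot \ker f$, is a quotient of $G/c(G)$. Now $G/c(G)$ is profinite, being the quotient of a compact group by its identity component, and every Hausdorff quotient of a profinite group is profinite, hence totally disconnected. Therefore $H/f(c(G))$ is totally disconnected, as required, and combining the two inclusions yields $f(c(G)) = c(H)$.

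The only genuinely non-formal point, and thus the main obstacle, is the claim that $H/f(c(G))$ is totally disconnected; everything else is bookkeeping with continuity, compactness and the isomorphism theorems. One could argue this claim directly from the fact that a continuous surjective homomorphic image of a profinite group is profinite, but routing it through the quotient $G/(c(G)\cdot\ker f)$ as above converts it into the cleaner standard statement that a Hausdorff quotient of a profinite group is profinite.
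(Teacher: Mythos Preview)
Your argument is correct and is the standard proof of this folklore fact. The paper itself does not supply a proof: it merely records the statement as ``folklore'' and uses it as a black box in Proposition~\ref{lem:STEP4}, so there is no authors' proof to compare against. Your write-up could serve as the missing justification; the only minor comment is that once you have $H/f(c(G)) \cong G/(c(G)\cdot\ker f)$, you can avoid the phrase ``quotient of a profinite group is profinite'' by simply noting that $G/(c(G)\cdot\ker f)$ is a Hausdorff quotient of the compact totally disconnected group $G/c(G)$, hence itself compact totally disconnected---but that is exactly what you wrote, just without the word ``profinite''.
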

Now we apply Proposition \ref{new:W:L} to a particular instance of Theorem A, where we prove that condition (b) implies condition (c), under some additional assumptions.
\begin{proposition}\label{lem:STEP4}
	Let $K$ be a  compact abelian group and assume that $K$ has a subgroup $N\cong F\times \Z_p$, where $F$ is finite and $p$ is a prime number, and such that $K/N \cong \T^\kappa$ for some cardinal  $\kappa$.
	
	If $K$ is densely locally minimal, then $\kappa = 0$ (i.e., $K = N$), so in particular $K$ has an open subgroup isomorphic to $\Z_p$.
\end{proposition}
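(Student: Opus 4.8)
The plan is to first pin down $\kappa$ and then run a contradiction argument if $\kappa\ge1$. Since $N\cong F\times\Z_p$ is metrizable it is separable, so $d(N)\le\omega\le\kappa\cdot\omega$; together with the hypothesis $K/N\cong\T^\kappa$ this lets me invoke Proposition \ref{new:W:L} to conclude that $\kappa$ is finite. Assume now, towards a contradiction, that $\kappa\ge1$. Then $w(K)=\max\{w(N),w(\T^\kappa)\}=\omega$, so $K$ is second countable. Being compact, $K$ is locally minimal, hence by the Local Minimality Criterion (Fact \ref{Crit}(2)) the group $K$ is densely locally minimal precisely when every dense subgroup of $K$ is locally essential. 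My goal becomes the construction of a single dense subgroup of $K$ that fails to be locally essential.

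The mechanism producing this failure is the closed subgroup $R:=\{0\}\times\Z_p\le N\le K$, which is linearly topologized, torsion-free, and non-NSS. Concretely, I claim that any dense subgroup $H\le K$ with $H\cap R=\{0\}$ cannot be locally minimal. Indeed, were $H$ locally minimal, it would be locally essential in $K$ by Fact \ref{Crit}(2); applying Fact \ref{fac:lintop} to the linearly topologized subgroup $R$ then yields an open subgroup $V\le R$ (so $V=p^{k}\Z_p\cong\Z_p$ for some $k$) with $H\cap V$ essential in $V$, and in particular $H\cap V\ne\{0\}$, contradicting $H\cap R=\{0\}$. (Alternatively one argues as in the proof of Lemma \ref{direct:sum:cyclic} via \cite[Lemma 2.10]{DHXX}, using that the small subgroups $p^{k}\Z_p$ of the non-NSS group $R$ all miss $H$.) Thus everything reduces to exhibiting a dense subgroup $H$ of $K$ meeting $R$ only at $0$.

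To build such an $H$, I would take it of the form $H=\langle g\rangle+T$ with $T\le K$ a finite (hence torsion) subgroup and $g\in K$ an element of infinite order whose image in $Q:=K/R$ has infinite order. The second requirement is easy to guarantee: $Q$ is an extension of $\T^\kappa$ by the finite group $N/R\cong F$, hence a compact abelian Lie group of dimension $\kappa\ge1$, so it is infinite, and a non-torsion element of $Q$ has infinite order; if the image of $g$ has infinite order then $\langle g\rangle\cap R=\{0\}$. Granting this, $H\cap R=\{0\}$ comes for free from the torsion-freeness of $\Z_p$: if $mg+t\in R$ with $t\in T$, then multiplying by $s:=|T|$ gives $smg\in R$, whence $sm=0$ and so $m=0$, and then $t\in R$ is torsion, forcing $t=0$.

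The main obstacle is the remaining density requirement, i.e.\ arranging that $\langle g\rangle+T$ is dense in $K$ while keeping the $\Z$-part transverse to $R$. When $F=\{0\}$ this is immediate: dualizing $0\to R\to K\to\T^\kappa\to0$ gives an extension $0\to\Z^\kappa\to\widehat K\to\Z(p^\infty)\to0$, and for every prime $q$ the socle $\widehat K[q]$ embeds into $\Z(p^\infty)[q]$, hence is cyclic; thus $\widehat K$ embeds in $\T$, $K$ is monothetic, and a topological generator $g$ (with $T=\{0\}$) finishes the proof. For nontrivial $F$ the group $K$ may fail to be monothetic, and this is the technical heart: I would write $K/c(K)$ as a product of a finite group with a procyclic pro-$p$ group, lift its finite part to a finite subgroup $T\le K$ using that the connected component $c(K)$ is divisible (so the relevant extension splits algebraically), and choose $g$ topologically generating $c(K)$ together with the procyclic part, so that $\langle g\rangle+T$ is dense; the transversality $T\cap R=\{0\}$ is automatic since $R$ is torsion-free. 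Verifying that such a $g$ exists and that the resulting subgroup is genuinely dense is where the real work lies.
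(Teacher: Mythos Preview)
Your strategy is correct and in fact converges to the paper's own construction: there too one sets $H=F+\langle x\rangle$ with $x=z+b$, where $b$ is a topological generator of the (monothetic, because compact connected metrizable abelian) group $c(K)$ and $\langle z\rangle$ is dense in $\Z_p$. No ``lifting'' of the finite part is needed: since $N\cong F\times\Z_p$ is totally disconnected, $c(K)\cap N=\{0\}$, so $K=c(K)+N$ gives $K/c(K)\cong N$ and one may simply take $T=F\subseteq N\subseteq K$. Your argument for non-local-essentiality via $H\cap R=\{0\}$ is a clean shortcut; the paper instead argues (its Claim~2) that local essentiality would force $\langle x\rangle\cap N$ to have finite index in $\langle x\rangle$, making $q(H)$ finite, contradicting its density in $\T^n$.

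The gap you yourself flag---density of $H$ for general $F$---is indeed the technical heart, and the paper fills it (its Claim~1) as follows. Let $A=\overline H$. Since $F,z\in N=\ker q$, one has $q(H)=\langle q(b)\rangle$, which is dense in $q(c(K))=\T^n$; hence $q(A)=\T^n$, and Fact~\ref{fac:CON} gives $q(c(A))=\T^n$. As $\ker q=N$ is zero-dimensional, Pasynkov's dimension formula yields $\dim c(A)=n=\dim c(K)$, so the connected group $c(K)/c(A)$ is zero-dimensional, hence trivial, and $c(K)\subseteq A$. On the other side, under $s:K\to K/c(K)\cong N$ one has $s(x)=s(z)$, so $s(H)=s(F+\langle z\rangle)$ is dense in $s(N)=K/c(K)$, whence $A+c(K)=K$ and thus $A=K$. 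A shorter alternative, bypassing dimension theory, is also available once you have both halves: from $q(A)=\T^n$ one gets $A+N=K$, so $K/A$ is simultaneously a quotient of the totally disconnected group $N$ and (from $A+c(K)=K$) of the connected group $c(K)$, hence trivial.
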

\begin{proof}
	Let $q: K \to K/N \cong \T^\kappa$ be the quotient homomorphism with $\ker q = N$. As $d(N)=\omega$, $\kappa$ is finite by Proposition \ref{new:W:L}. Assume by contradiction that $\kappa \neq 0$, so let $\kappa=n\in \N_+.$
	
By Fact \ref{fac:CON},
\begin{equation*}	
q(c(K)) = c(\T^n) = \T^n.
\end{equation*}
	
	Since both $\T^n$ and $F \times \Z_p$ are metrizable, the group $K$ is also metrizable (see \cite[Corollary 3.3.20]{AT}), and so is $c(K)$.
	It is known that a compact metrizable connected abelian group is monothetic (see \cite[Example 8.75]{HM}),  i.e., it contains a dense cyclic subgroup (that is necessarily infinite, if the group is non-trivial). Fix a dense (infinite) cyclic subgroup $B$ of $c(K)$ and call $b$ its generator. Moreover, fix also a cyclic subgroup $\langle z \rangle$ dense in $\Z_p$. Now let $x:=z + b$ and $H = F + \langle x \rangle$.\medskip
	
\noindent
\textbf{Claim 1} $H$ is dense in $K$.
\begin{proof} Let $A$ be the closure of $H$ in $K$. In order to prove that  $A = K$ we first prove that  $q(A) = q(K) = \T^n$.
Since $q(x) = q(b)$  and $q(F) = 0$, we deduce that  $q(H) = \langle q(b) \rangle$. As  $\langle b \rangle$ is dense in $c(K)$ and $q(c(K)) = \T^n$, it follows that $q(H)$  is dense in  $\T^n$.
So, $q(A) = \overline{q(H)} = \T^n$ and indeed $q(c(A)) = \T^n$ by Fact \ref{fac:CON}.
		
Next we show that $c(K)$ is contained in $A$. By a dimension theorem given in \cite{PA}, we obtain \[\dim c(A) =\dim c(K)=\dim \T^n=n,\] as $\ker q=N$ is zero dimensional. Moreover, since the quotient group $c(K)/c(A)$ is connected and
\[
\dim (c(K)/c(A)) = \dim c(K) - \dim c(A) = 0,
\]
we conclude that $c(K) = c(A)$ is contained in $A$.
		
To end the proof of the claim, i.e., the proof of the equality $A = K$, it suffices to see that $A + c(K)=K$. Consider the quotient map $s : K \to K/c(K) \cong N/(c(K) \cap N)$, as $K = c(K) + N$. Since $b\in c(K)$, $s(b)=0$, so $s(x) = s(z)$. Then $s(H) = s(F + \langle z \rangle)$. Therefore $s(A) = s(\overline{H})= \overline{s(H)} =\overline{s(F + \langle z \rangle)}= s(\overline{F + \langle z \rangle}) =  s(N)$, as $\overline{F + \langle z \rangle} = N$. Thus $s(A) = K/c(K)$, it follows that $A + c(K) = K$.
\end{proof}
	
\noindent
\textbf{Claim 2.} $H$ is not locally essential in $K$.
\begin{proof}
Assume $H$ to be locally essential in $K$ and let $U$ be an open neighborhood of $e$ in $K$ with compact closure, and witnessing the local essentiality of $H$. As $\displaystyle{\bigcap}_{n \in \N} p^n \Z_p = \{e\} \subseteq U$, by the compactness of $\overline{U}$ we find  $n\in \N $ such that $p^n \Z_p \cap \overline{U}  \subseteq U$. Then $p^n \Z_p \cap H$ is non-trivial, so it is infinite and $H \cap N$ is infinite as well. Since $F\leq N$, by the modular law we obtain $H \cap N= (F+\langle x \rangle)\cap N= F+(\langle x \rangle \cap N)$. In particular, also  $\langle x \rangle \cap N$  is infinite so has finite index in $\langle x \rangle$. Furthermore, $|H/H\cap N|=|(F+\langle x \rangle)/(F+(\langle x \rangle \cap N))|\leq |\langle x \rangle /\langle x \rangle \cap N|<\infty.$ It follows that $q(H) =(H + N)/N \cong H/H\cap N$ is finite. This contradicts the fact that $q(H)$ is dense in $\T^n = K/N$.
\end{proof}
	
Claim 1 and Claim 2 complete the proof of Proposition \ref{lem:STEP4}, contradicting the assumption that $K$ is densely locally minimal.
\end{proof}

{\bf Proof of Theorem A.} We have to prove  that for a compact abelian group $K$, the following conditions are equivalent:
\ben[(a)]
\item $K$ is a \HLM \ group;
\item $K$ is a \DLM \ group;
\item $K$ either is a Lie group or $K$ has an open subgroup isomorphic to $\Z_p$ for some prime $p$.
\een

 $(a)\Leftrightarrow(c)$: Follows from Fact \ref{fac:ext}.\\
$(a)\Rightarrow(b)$: It is clear by the definition.\\
$(b)\Rightarrow(c)$: Write $K=K_{tor}\times K_{d}$, where $K_{tor}$ is a compact bounded abelian group and $K_{d}$ is w-divisible, as in Fact \ref{fac:SPL}. By Lemma \ref{lem:DHM}, both $K_{tor}$ and $K_d$ are \DLM,
so $K_{tor}$ is finite by Lemma \ref{direct:sum:cyclic}.
In the rest of the proof, we show that $G = K_d$ either is a Lie group or has an open subgroup isomorphic to $\Z_p$ for some prime $p$, so that $K$ satisfies condition (c).

As $G$ is a compact abelian group, there exists a closed profinite subgroup $N$ of $G$ such that  $G/N \cong \T^\kappa$ for some cardinal $\kappa$ (see \cite[Proposition 8.15]{HM}). Let $q:G\to G/N$ be the canonical projection.

We break the proof into three steps.

\medskip
\noindent
\textbf{Step 1.}
By our assumptions on $G$ and using Fact \ref{fac:WDC}(1), there exists a dense free subgroup $F$ of $G$. As $G$ is \DLM, we deduce that $F$ is locally essential by the Local Minimality Criterion.
Since $N$ is profinite,
Fact \ref{fac:lintop} provides an open subgroup $O$ of $N$ (so of finite index), such that $F \cap O$ is essential in $O$. As $F \cap O$ is torsion-free, this yields that $O$ is torsion-free. Thus $O \cap t(N) = \{0\}$, and as $[N:O]$ is finite we deduce that $t(N)$ is finite. So, by \cite[Theorem 27.5]{Fuc}, we obtain $N \cong N_1 \times t(N)$ with $N_1$ torsion-free.

\medskip
\noindent
\textbf{Step 2.} Assume that $N$ is finite with $|N|=m$. If $\kappa$ is infinite, then clearly $d(N)=m\leq \kappa$, contradicting Proposition \ref{new:W:L}.

As $\kappa$ is finite, Pontryagin duality implies that $N\cong \widehat{N}\cong \widehat{G}/\Z^\kappa$. Using the three-space property of finitely generated groups, we deduce that $\widehat{G}$ is finitely generated. By \cite[Theorem 15.5]{Fuc}, a (discrete) finitely generated abelian group has the form $A \times \Z^n$, where $A$ is finite and $n\in \N$, so $G \cong \widehat{\widehat{G}} \cong A \times \T^n$ is a Lie group.

\medskip
\noindent
\textbf{Step 3.} We are left now with the case when $N= N_1\times t(N)$ is infinite, i.e., $N_1$ is infinite. As $N_1$ is compact, totally disconnected and torsion-free, the dual of $N_1$ is discrete, torsion and divisible. Thus $\widehat{N_1}\cong \displaystyle{\bigoplus}_p\Z(p^\infty)^{(\kappa_p)}$
and $N_1 \cong \displaystyle{\prod}_p\Z_{p}^{\kappa_{p}}$. We show that $N_1$ is  isomorphic to $\Z_p$  for some prime $p$.

Otherwise, $N_1$ is not densely locally minimal, by Corollary \ref{cor:product}. According to  Lemma \ref{lem:DHM}, also $N$ has a dense subgroup $T$ that is not locally essential. Since $G/N\cong \T^\kappa$ is w-divisible, Fact \ref{fac:WDC}(1) provides a dense free subgroup $F_1$ of $\T^\kappa$ with $|F_1|\leq\kappa\cdot \omega$, and we can choose a free subgroup $F_2$ of $G$ that trivially meets $N$ and that is sent onto $F_1$ by the quotient map $q$. Consider the subgroup $K = T + F_2$ of $G$.
Since $T$ is a dense subgroup of $N$ and $q(K)$ is dense in $\T^\kappa$, Fact \ref{fac:WDC}(2) implies that  $K$ is dense in $G$.
Using the fact that $T$ is not locally essential in $N$, and that $F_2 \cap N$ is trivial, one can see that $K$ is not locally essential in $G$, a contradiction.

Then, $N_1\cong\Z_p$ and $N \cong \Z_p \times t(N)$ with $t(N)$ finite.   Then $\kappa=0$ by Proposition \ref{lem:STEP4}, i.e., $G = N$ has an open subgroup isomorphic to $\Z_p$.
\qed
\medskip

As a corollary of Theorem A we obtain the following.
\begin{corollary}\label{lem:SHM}
If a compact torsion-free abelian group $K$ is \DLM, then $K$ is isomorphic to $\Z_{p}$ for some prime $p$.
\end{corollary}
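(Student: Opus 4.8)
The plan is to derive the corollary directly from Theorem~A, followed by a short structural argument that rules out all but the $\Z_p$ alternative. Since $K$ is compact, abelian and \DLM, Theorem~A (the implication $(b)\Rightarrow(c)$) tells us that $K$ either is a Lie group or has an open subgroup isomorphic to $\Z_p$ for some prime $p$. First I would dispose of the Lie group alternative: a compact abelian Lie group is topologically isomorphic to $\T^n\times F$ with $F$ finite, and torsion-freeness forces $F=\{0\}$ together with $n=0$, because $\T$ contains non-trivial torsion and hence $\T^n$ is torsion-free only for $n=0$. Thus this case produces only the trivial group; since a finite torsion-free abelian group is trivial anyway, for a non-trivial (equivalently, infinite) $K$ we are necessarily in the second alternative.

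The substance of the proof is the remaining case, where $K$ has an open subgroup $O\cong\Z_p$. As $O$ is open and $K$ is compact, the quotient $K/O$ is a compact discrete, hence finite, group, so $m=[K:O]$ is finite. I would then observe that $mK\subseteq O$: for any $x\in K$ the coset $x+O$ has order dividing $m$ in $K/O$, so $mx\in O$. Because $K$ is compact and multiplication by $m$ is continuous, $mK$ is a compact, hence closed, subgroup of $O\cong\Z_p$; the only closed subgroups of $\Z_p$ are $\{0\}$ and $p^{k}\Z_p$. Here $mK$ cannot be trivial, for $mK=\{0\}$ would mean that $K$ is annihilated by $m$, hence bounded and therefore torsion, contradicting that $K$ is an infinite torsion-free group. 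Consequently $mK=p^{k}\Z_p$ is topologically isomorphic to $\Z_p$.

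Finally, I would upgrade this to $K\cong\Z_p$. Torsion-freeness makes multiplication by $m$ injective, since its kernel is the $m$-torsion subgroup $K[m]=\{0\}$, so this map is a continuous bijective homomorphism from the compact group $K$ onto $mK$. A continuous bijective homomorphism from a compact group to a Hausdorff group is a topological isomorphism, whence $K\cong mK\cong\Z_p$. The one point requiring care, and the mild obstacle of the argument, is precisely this last upgrade from an algebraic to a topological isomorphism: it is where compactness of $K$ is indispensable, guaranteeing both that $mK$ is a genuine closed $\Z_p$-subgroup of $O$ and that the injective multiplication-by-$m$ map is a homeomorphism onto its image.
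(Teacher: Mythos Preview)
Your proof is correct, but it takes a different route from the paper's. The paper argues as follows: Theorem~A gives that $K$ is \HLM; then an external result (\cite[Theorem~B]{firstpaper}, which says that a compact torsion-free \HLM\ group is \HM) upgrades this to \HM; finally Prodanov's theorem (Fact~\ref{fac:HMZ}) identifies $K$ with some $\Z_p$. So the paper goes DLM $\Rightarrow$ HLM $\Rightarrow$ HM $\Rightarrow$ $\Z_p$, using Theorem~A only for the implication $(b)\Rightarrow(a)$ and then invoking two further classification results.

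You instead extract the structural conclusion $(c)$ of Theorem~A directly and finish by hand: ruling out the Lie alternative by torsion-freeness, and in the remaining case using the elementary observation that multiplication by the index $m=[K:O]$ is an injective continuous map of the compact group $K$ into $O\cong\Z_p$, hence a topological isomorphism onto a non-trivial closed subgroup $p^k\Z_p\cong\Z_p$. This is more self-contained---it avoids both the appeal to \cite{firstpaper} and to Prodanov's theorem---at the cost of a slightly longer argument. Both proofs share the same implicit assumption that $K$ is infinite (the trivial group satisfies the hypotheses but not the conclusion), but this is harmless in context.
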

\begin{proof}
By our assumption on $K$, we obtain that $K$ is \HLM\ according to Theorem A. Moreover, since $K$ is also torsion-free, we deduce that it is \HM\ by \cite[Theorem B]{firstpaper}. Hence $K\cong \Z_p$ for some prime $p$ by Fact \ref{TeoP}.
\end{proof}

By Theorem A, a compact abelian group is \DLM \ precisely when it is \HLM. In Example \ref{ex:ac} we show, in contrast, that there exist compact two-step nilpotent groups that are \DM, but not \HLM.

The following easy result appears in \cite{firstpaper}. We apply it in  Corollary \ref{cor:center} to two-step nilpotent topological groups i.e., to groups $G$ satisfying $G'\leq Z(G)$, where  $Z(G)$ is the center  and $G'$ is the derived subgroup of $G$.

\begin{fact}
Let $G$ be a group and $G'$ be its derived subgroup.  If $H$ is a subgroup of $G$ that is either
 non-central normal or non-abelian, then $H\cap G'$ is non-trivial.
\end{fact}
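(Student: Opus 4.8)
The plan is to treat the two hypotheses separately, since the statement is a disjunction: $H$ is \emph{either} non-abelian \emph{or} non-central normal (by which I mean normal in $G$ but not contained in $Z(G)$). In each case I would exhibit an explicit non-trivial element of $H \cap G'$, namely a suitable commutator.

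First I would dispose of the case where $H$ is non-abelian. Since $H$ fails to be abelian, its derived subgroup $H'$ is non-trivial. On the other hand, every commutator $[h_1,h_2]$ with $h_1,h_2 \in H$ is in particular a commutator of elements of $G$, so $H' \leq G'$; and trivially $H' \leq H$ because $H$ is a subgroup. Hence $\{e\} \neq H' \leq H \cap G'$, which settles this case.

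Next I would treat the case where $H$ is non-central normal, i.e.\ $H \trianglelefteq G$ but $H \not\leq Z(G)$. The failure of $H \leq Z(G)$ produces some $h \in H$ that is not central in $G$, so there is $g \in G$ with $[h,g] = h^{-1}g^{-1}hg \neq e$. By definition $[h,g] \in G'$. The point is that normality of $H$ forces this commutator back into $H$: writing $[h,g] = h^{-1}\cdot(g^{-1}hg)$ and using $g^{-1}hg \in H$ (because $H \trianglelefteq G$), we see that $[h,g]$ is a product of two elements of $H$, so $[h,g] \in H$. Therefore $e \neq [h,g] \in H \cap G'$, completing this case.

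Since the argument is elementary, there is no genuine obstacle; the only subtlety worth flagging is the differing role played by each hypothesis. In the non-abelian case mere non-commutativity inside $H$ suffices, whereas in the normal case non-centrality alone would only guarantee a non-trivial commutator in $G'$, and it is \emph{exactly} the normality of $H$ that keeps that commutator inside $H$. Thus the two hypotheses correspond precisely to the two mechanisms that can drive a non-trivial commutator into $H \cap G'$.
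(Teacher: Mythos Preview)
Your proof is correct. Note, however, that the paper does not actually prove this statement: it is recorded as a Fact cited from \cite{firstpaper} (``The following easy result appears in \cite{firstpaper}''), so there is no in-paper proof to compare against. That said, your two-case argument---using $H' \leq H \cap G'$ when $H$ is non-abelian, and producing a non-trivial commutator $[h,g] = h^{-1}(g^{-1}hg) \in H \cap G'$ via normality when $H$ is non-central normal---is exactly the standard elementary verification one would expect, and there are no gaps.
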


\begin{corollary}\label{cor:center}
Let $G$ be a two-step nilpotent topological group.
\ben
\item The center $Z(G)$  is essential in $G$.
\item If $H$ is a non-abelian subgroup of $G$, then $H\cap Z(G)$ is non-trivial.
\een
\end{corollary}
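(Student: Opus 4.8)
The plan is to derive both statements directly from the preceding Fact together with the defining inclusion $G' \leq Z(G)$ of a two-step nilpotent group. The key observation is that, since $G' \leq Z(G)$, whenever a subgroup $H$ satisfies $H \cap G' \neq \{e\}$ we automatically obtain $\{e\} \neq H \cap G' \subseteq H \cap Z(G)$, so that $H \cap Z(G)$ is non-trivial as well. Thus the only role of the Fact is to produce a non-trivial element of $H \cap G'$, which the two-step condition then upgrades to a non-trivial element of $H \cap Z(G)$.

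For part (1), I would verify the essentiality condition directly from the definition: let $N$ be an arbitrary non-trivial closed normal subgroup of $G$, and show $Z(G) \cap N \neq \{e\}$. I would split into two cases according to whether $N$ is central. If $N \leq Z(G)$, then $Z(G) \cap N = N \neq \{e\}$ and there is nothing to prove. If instead $N$ is non-central, then $N$ is a non-central normal subgroup, so the Fact (applied with $H = N$) gives $N \cap G' \neq \{e\}$; the inclusion $G' \leq Z(G)$ then yields $\{e\} \neq N \cap G' \subseteq N \cap Z(G)$, as desired. Note that the closedness of $N$ plays no role in the argument itself; it is needed only to match the definition of essentiality.

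For part (2), the argument is even shorter and requires no normality hypothesis. Given a non-abelian subgroup $H$ of $G$, the Fact (applied to the non-abelian $H$) immediately produces $H \cap G' \neq \{e\}$, and the inclusion $G' \leq Z(G)$ gives $\{e\} \neq H \cap G' \subseteq H \cap Z(G)$.

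The main (and essentially only) subtlety lies in the case distinction in part (1): the Fact requires its subgroup to be either non-central normal or non-abelian, so it cannot be applied blindly to every normal $N$, since a central normal subgroup need not be non-abelian. In that borderline case, however, essentiality is immediate because $N$ itself is contained in $Z(G)$. Once this dichotomy is handled, there is no genuine obstacle: both assertions are formal consequences of the Fact and the inclusion $G' \leq Z(G)$.
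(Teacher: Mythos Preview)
Your proposal is correct and matches the paper's approach exactly: the paper states this result as an immediate corollary of the preceding Fact and gives no separate proof, so the intended argument is precisely the one you wrote, using $G' \leq Z(G)$ to pass from $H \cap G' \neq \{e\}$ to $H \cap Z(G) \neq \{e\}$, with the case split in part (1) handling the central-versus-non-central dichotomy.
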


\begin{example}\label{ex:ac}
Consider the ring multiplication $w:\Z_p\times \Z_p\to \Z_p, \ w(a,b)=ab$ and let $G=H(w)=(\Z_p\times \Z_p) \rtimes_{\alpha}\Z_p$ be the induced generalized Heisenberg group (see \cite{DMe}). The group $G$ is isomorphic to the following subgroup of the special linear group $\SL(3, \Z_p)$
 \[
\Bigg\{\left(\begin{array}{ccc}
1 & a &  b \\
0 & 1 & c \\
0 & 0 & 1
\end{array}\right)\bigg | \ \ a,b,c \in \Z_p\Bigg\}.
\]
It is known that $G$ is a compact two-step nilpotent group, and  $Z(G)=(\Z_p\times \{0\}) \rtimes\{0\}\cong \Z_p$. As $G$ contains a copy of $\Z_p\times \Z_p$, we deduce by Fact \ref{fac:ext} that $G$ is not \HLM.

Now we show that $G$ is \DM, so let $H$ be a dense subgroup of $G$. In view of the Minimality Criterion, we have to prove that $H$ is essential in $G$. So let $N$ be a non-trivial closed normal subgroup of $G$, and we show that $H\cap N$ is non-trivial. By Corollary \ref{cor:center}(1), the subgroup $N_1 = N \cap Z(G)$ is non-trivial, and it is closed in $Z(G)$. Clearly, the dense subgroup $H$ of $G$ is non-abelian, so $H_1=H\cap Z(G)$ is non-trivial by Corollary \ref{cor:center}(2).
As every non-trivial subgroup of $\Q_p$ is essential (see \cite[Lemma 2.5]{firstpaper}), and $Z(G)$ is closed in $\Q_p$, it follows that every non-trivial subgroup of $Z(G)$ is essential in $Z(G)$, and in particular $H_1$ is essential in $Z(G)$. As $N_1$ is closed in $Z(G)$, this implies that $H_1\cap N_1$ is non-trivial. In particular, $H\cap N$ is non-trivial.
\end{example}

\bigskip

\noindent
{\bf Proof of Theorem B.} We just need to prove $(b)\Rightarrow(c)$, i.e., if a connected locally compact abelian group $G$ is \DLM, then it is a Lie group.

Assume that $G$ is a connected locally compact abelian group. It is known that $G \cong \R^n \times K$, where $n\in \N$ and $K$ is a connected compact group. Then $K$ is \DLM\ by Lemma \ref{lem:DHM}; moreover, $K$ is connected, so it is a Lie group by Theorem A.
Thus $G$ itself is a Lie group. \qed

\section{Dense minimality vs Hereditary minimality}\label{sec:vs}

We start this section proving another sufficient condition for a compact abelian group to be isomorphic to some $\Z_p$.
\begin{lemma}\label{lem:MTF}
	If a w-divisible compact abelian group $K$ is \DM, then it is isomorphic to $\Z_p$ for some prime $p$.
\end{lemma}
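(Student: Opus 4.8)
The plan is to use w-divisibility to extract a dense free abelian subgroup of $K$, and then let dense minimality do the work: essentiality of that subgroup will be incompatible with any nontrivial torsion, forcing $K$ to be torsion-free, after which the already proved Corollary \ref{lem:SHM} closes the argument.

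First I would note that $K$ is infinite, since w-divisibility yields $w(K)\geq \omega$, and that $K$, being compact, is minimal. Applying Fact \ref{fac:WDC}(1) to the w-divisible compact abelian group $K$, I obtain a dense free abelian subgroup $F\leq K$ (with $|F|=d(K)$, although the cardinality is not needed here). Since $K$ is densely minimal and $F$ is dense, $F$ is minimal; as $K$ itself is minimal, the Minimality Criterion (Fact \ref{Crit}(1)) shows that $F$ is essential in $K$.

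Next I would play essentiality against the torsion of $K$. Suppose $t(K)\neq\{0\}$ and choose a nontrivial element $x\in t(K)$; then $\langle x\rangle$ is a nontrivial finite, hence closed, subgroup of $K$, and it is normal since $K$ is abelian. Essentiality of $F$ then forces $F\cap\langle x\rangle\neq\{0\}$, which is absurd because $F$ is free abelian, hence torsion-free, so it meets the finite group $\langle x\rangle$ only in $0$. Therefore $K$ is torsion-free.

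Finally, dense minimality implies dense local minimality (every minimal group is locally minimal), so $K$ is a compact torsion-free abelian \DLM\ group, and Corollary \ref{lem:SHM} gives $K\cong\Z_p$ for some prime $p$. The one substantive ingredient is the existence of the dense free subgroup supplied by Fact \ref{fac:WDC}(1); everything after it is a short essentiality computation, so I do not anticipate a real obstacle. The only point requiring minor care is confirming that the closed subgroup against which $F$ is tested (a finite cyclic torsion subgroup) is genuinely closed and normal, which is immediate in the compact abelian setting.
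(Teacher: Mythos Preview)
Your proof is correct and follows exactly the same route as the paper: extract a dense free abelian subgroup via Fact~\ref{fac:WDC}(1), use the Minimality Criterion to conclude it is essential, infer that $K$ is torsion-free, and finish with Corollary~\ref{lem:SHM}. Your write-up is slightly more explicit (spelling out the contradiction with a finite cyclic torsion subgroup and the passage from \DM\ to \DLM), but there is no substantive difference.
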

\begin{proof}
By Fact \ref{fac:WDC}(1), $K$ has a dense free abelian subgroup $H$.
Then $H$ is essential in $K$ by the Minimality Criterion, and $H$ is torsion-free, so $K$ is torsion-free as well. Then $K \cong \Z_{p}$ for some prime $p$ by Corollary \ref{lem:SHM}.
\end{proof}

The following result is contained in the proof of \cite[Theorem 1.3]{DHXX}, but we give a proof of this property for the sake of the reader. Together with the above Lemma \ref{lem:MTF}, and the splitting theorem  Fact \ref{fac:SPL}, it will provide the proof of Theorem C.
\begin{lemma}\cite{DHXX}		\label{Zp:times:finite:ab}
	Let $p$ be a prime number, and $F$ be a finite abelian group such that $K=\Z_p \times F$ is \DM. Then $F$ is trivial.
\end{lemma}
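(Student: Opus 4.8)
The plan is to argue by contradiction. Suppose $F\neq\{0\}$; I will produce a \emph{dense} subgroup $H$ of $K=\Z_p\times F$ that is \emph{not essential}. This contradicts the hypothesis, because by the Minimality Criterion (Fact~\ref{Crit}(1)) a dense subgroup of $K$ is minimal if and only if $K$ is minimal and the subgroup is essential, so a dense non-essential subgroup can never be minimal, whereas dense minimality of $K$ demands that \emph{every} dense subgroup be minimal. The crucial reduction is the observation that it suffices to arrange $H\cap(\{0\}\times F)=\{0\}$: once this holds, for any non-trivial subgroup $C\leq F$ the finite (hence closed) subgroup $\{0\}\times C$ is non-trivial and satisfies $H\cap(\{0\}\times C)\subseteq H\cap(\{0\}\times F)=\{0\}$, so $H$ is not essential.

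For the construction, I would decompose the finite abelian group as $F=\bigoplus_{i=1}^{r}\langle f_i\rangle$. Since $\Z_p$ is uncountable while $\Q$ is countable, $\Z_p$ (viewed inside $\Q_p$) has uncountable dimension over $\Q$, so I can choose elements $a_1,\dots,a_r,u\in\Z_p$ that are linearly independent over $\Q$ and with $u\notin p\Z_p$, i.e. $u$ a unit (pick them greedily, at each step avoiding the countable $\Q$-span of the previous ones and staying off $p\Z_p$). Then set
$$H=\langle (a_1,f_1),\dots,(a_r,f_r),(u,0)\rangle\leq \Z_p\times F.$$
To see that $H$ is dense I would apply Fact~\ref{fac:WDC}(2) to the canonical projection $h=\pi_2\colon K\twoheadrightarrow K/(\Z_p\times\{0\})\cong F$: indeed $h(H)=\langle f_1,\dots,f_r\rangle=F$, and $H$ contains $\langle(u,0)\rangle$, which is dense in $\ker h=\Z_p\times\{0\}$ because $\overline{\langle u\rangle}=u\Z_p=\Z_p$ as $u$ is a unit. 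Hence $H$ is dense in $K$.

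It remains to verify the trivial intersection. A general element of $H$ has the form $\bigl(\sum_i m_i a_i+lu,\ \sum_i m_i f_i\bigr)$ with $m_i,l\in\Z$. If its first coordinate vanishes, then $\sum_i m_i a_i+lu=0$ in $\Z_p\subseteq\Q_p$, and the $\Q$-linear independence forces $m_1=\dots=m_r=l=0$; thus the element is $(0,0)$, proving $H\cap(\{0\}\times F)=\{0\}$ and completing the contradiction.

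I expect the main obstacle to be the tension between the two requirements on $H$. Density forces $H$ to surject onto $F$ and to be large in the $\Z_p$-direction, while $H\cap(\{0\}\times F)=\{0\}$ forces $\pi_1|_H$ to be injective, so that $H$ embeds into the torsion-free group $\Z_p$. The device that reconciles the two is precisely the $\Q$-linear independence of the $p$-adic coordinates $a_i$ together with $u$: it is exactly what rules out the nontrivial integer relations $\sum_i m_i a_i+lu=0$ that would otherwise drop a nonzero torsion element of $\{0\}\times F$ into $H$. This also handles uniformly the delicate case $p\mid|F|$, where naive monothetic choices such as $\langle(1,f)\rangle$ fail to be dense (their closure is a graph $\cong\Z_p$ over $\Z_p$ rather than all of $K$).
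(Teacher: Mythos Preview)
Your proof is correct and follows essentially the same route as the paper: construct a dense subgroup $H\leq\Z_p\times F$ meeting $\{0\}\times F$ trivially by pairing each element of (a generating set of) $F$ with a $p$-adic number chosen so that, together with a unit, they are $\Q$-linearly independent in $\Z_p$, and then invoke Fact~\ref{fac:WDC}(2) for density and the Minimality Criterion for the contradiction. The only cosmetic differences are that the paper enumerates \emph{all} elements of $F$ rather than a cyclic generating set, and simply takes your unit $u$ to be $1$ (so $H\supseteq\Z\times\{0\}$), which slightly streamlines the density check.
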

\begin{proof}
	By contradiction, let $F= \{c_{1}, c_{2},\ldots, c_{n}\}$ be non-trivial. Pick elements $\xi_{1}, \xi_{2}, \ldots , \xi_{n}$ in $\Z_p$ independent with $1$, and let $H$ be the subgroup of $K$ generated by $\Z \times \{0\}$ and $\{(\xi_{i}, c_{i}) : i=1, 2, \ldots, n\}$.
	
Considering the projection $p : K\to F$, we deduce by Fact \ref{fac:WDC}(2) that $H$ is a dense subgroup of $K$, hence essential by the Minimality Criterion.  An element $h\in H$ has the form $h=(k,0)+ \sum_{i=1}^{n}k_{i} (\xi_{i}, c_{i})$, so the closed nontrivial subgroup $ \{0\}\times F$ of $K$ trivially meets $H$, a contradiction.
\end{proof}

Now we are in position to prove Theorem C.

\bigskip

\noindent
{\bf Proof of Theorem C.} We have to prove that for an infinite locally compact abelian group $K$, the following conditions are equivalent:
\ben[(a)]
\item $K$ is a \HM \ group;
\item $K$ is a \DM \ group;
\item $K$ is isomorphic to $\Z_p$ for some prime $p$.\\
By Fact \ref{fact:step}, we may assume that $K$ is compact.
\een$(a)\Leftrightarrow(c)$ follows from Fact \ref{fac:HMZ}.\\
$(a)\Rightarrow(b)$: It is clear by the definition.\\
$(b)\Rightarrow(c)$:
Write $K=K_{tor}\times K_{d}$, where $K_{tor}$ is a compact bounded abelian group and $K_{d}$ is w-divisible, as in Fact \ref{fac:SPL}. By Lemma \ref{lem:DHM}, both $K_{tor}$ and $K_d$ are \DM, so $K_{tor}$ is finite by Lemma \ref{direct:sum:cyclic}, while $K_d$ is isomorphic to $\Z_p$ by Lemma \ref{lem:MTF}. Now apply Lemma \ref{Zp:times:finite:ab}.
\qed

\medskip

By Theorem C, a locally compact abelian group is \DM \ precisely when it is \HM, and these properties characterize the $p$-adic integers among the infinite locally compact abelian groups. Example \ref{ex:ac} shows that the equivalence between those two properties need not hold relaxing the abelianity to two-step nilpotency, even for compact groups.

The next example shows that a compact metabelian group which is simultaneously \HLM \ and \DM\  need not be \HM. In what follows, $A^*$ denotes the multiplicative group of the invertible elements of a ring $A$.
\begin{example}\label{DHM:notHM}
Let $G=(\Q_p,+)\rtimes \Q_p^*$ and consider its compact subgroup $H=\Z_p \rtimes \Z_p^*$.
By \cite[Theorem A]{firstpaper}, the group $G$ (in particular, also its subgroup $H$)  is \HLM.
By \cite[Corollary 2.11]{firstpaper}, both $G$ and $H$ are also \DM. Since $\Z_p^*\cong \Z_p\times F$, where $F$ is a finite non-trivial group, it follows that $\Z_p^*$ is not \HM\ by Fact \ref{TeoP}. In particular, $H$ and $G$ are  not \HM.

We do not know if there exists a \HM\  locally compact group which is neither discrete nor compact (see \cite[Question 7.3]{firstpaper}). On the other hand, the group $G=(\Q_p,+)\rtimes \Q_p^*$ is an example of a \DM\  locally compact group which is neither discrete nor compact.
\end{example}

Recall that a (topologically) simple group is a group whose only normal subgroups (closed normal subgroups) are the trivial group and the group itself. Obviously, a minimal topologically simple group is totally minimal. Moreover, every dense subgroup of a topologically simple group is totally dense, so the Total Minimality Criterion implies:

\begin{lemma}\label{simple}
	A minimal topologically simple group is \DTM.
\end{lemma}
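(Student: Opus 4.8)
The statement to prove is Lemma~\ref{simple}: a minimal topologically simple group is \DTM. The plan is to verify the characterization of dense total minimality obtained from the Total Minimality Criterion (Fact~\ref{Crit}(3)): a group is densely totally minimal precisely when it is totally minimal and every dense subgroup is totally dense. So the argument splits into two independent observations, both exploiting topological simplicity.

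First I would establish that a minimal topologically simple group $G$ is in fact totally minimal. The only closed normal subgroups of $G$ are $\{e\}$ and $G$ itself, so the only Hausdorff quotients of $G$ are (topologically isomorphic to) $G$ and the trivial group. Both of these are minimal — $G$ by hypothesis, and the trivial group vacuously — so every Hausdorff quotient of $G$ is minimal, which is exactly total minimality.

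Second, I would show that every dense subgroup $H$ of $G$ is totally dense. Recall $H$ is totally dense in $G$ if $H \cap N$ is dense in $N$ for every closed normal subgroup $N$ of $G$. Again using topological simplicity, the only candidates for $N$ are $\{e\}$ and $G$. For $N = \{e\}$ the condition $H \cap \{e\} = \{e\}$ dense in $\{e\}$ is trivially satisfied, and for $N = G$ the condition $H \cap G = H$ dense in $G$ is precisely the assumption that $H$ is dense. Hence every dense subgroup is automatically totally dense.

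Combining these two points with the Total Minimality Criterion, every dense subgroup $H$ of $G$ is totally minimal, so $G$ is \DTM. I do not anticipate a genuine obstacle here: the entire content is that topological simplicity collapses the supply of closed normal subgroups to the two trivial ones, which simultaneously trivializes both the total-minimality and the total-density requirements. The only point requiring a modicum of care is making explicit that the nontrivial Hausdorff quotient of $G$ is $G$ itself (so that minimality of $G$ transfers to it), but this is immediate from the definition of topologically simple.
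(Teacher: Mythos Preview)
Your proof is correct and follows exactly the paper's approach: the paper notes (in the sentence immediately preceding the lemma) that a minimal topologically simple group is obviously totally minimal and that every dense subgroup of a topologically simple group is totally dense, and then invokes the Total Minimality Criterion. You have simply spelled out in detail the two trivial verifications that the paper leaves implicit.
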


The following example shows that there exists a discrete (densely) totally minimal group which is not \HM.
\begin{example} \label{ex:nontop} A discrete minimal  group is  also called {\it non-topologizable}. The first example of a non-topologizable group was provided by Shelah \cite{Sh} under the assumption of the Continuum Hypothesis. His example is simple and torsion-free, so this group is also (densely) totally minimal, yet not hereditarily minimal, as discrete hereditarily minimal groups are torsion by \cite[Lemma 3.5]{firstpaper}.
\end{example}

A \HM\ locally compact group is totally disconnected (see \cite[Proposition 3.9]{firstpaper}), while the next example shows that there exist \DTM, compact, pathwise connected groups.

\begin{example}\label{ex:SO}
Let $n\geq 5$ be an odd number and consider the special orthogonal group $G=\SO(n, \R)$ of degree $n$ over the reals. The group $G$ is compact, and simple (see for example \cite[Theorem 7.5]{grove}), so it is \DTM\ by Lemma \ref{simple}. On the other hand, it is also known that $G$ is pathwise connected (see for example \cite{naive}).
\end{example}

\section{Open questions and concluding remarks}\label{sec:open}
By Theorem B, a densely locally minimal, connected, locally compact abelian group is Lie. Can we omit the assumption `abelian' in the hypotheses of Theorem B? In other words:
\begin{question}
Let $G$ be a \DLM\ connected locally compact group. Is $G$ a Lie group?
\end{question}

\medskip

Dikranjan and Stoyanov classified all \HM \ abelian groups.
\begin{fact}\cite{DS}\label{fac:hmabel}
	Let $G$ be a topological abelian group. Then the following conditions are equivalent:
	\ben
	\item each subgroup of $G$ is totally minimal;
	\item  $G$ is \HM;
	\item $G$ is topologically isomorphic to one of the following groups:\ben [(a)]
	\item  a subgroup of $\Z_p$ for some prime $p$,
	\item  a direct sum $\bigoplus F_p$, where for each prime $p$ the group $F_p$ is a finite abelian $p$-group,
	\item $X\times F_p$,  where $X$ is a rank-one subgroup of $\Z_p$ and  $F_p$ is a finite abelian $p$-group.
	\een \een
\end{fact}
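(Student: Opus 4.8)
The plan is to prove the cycle $(1)\Rightarrow(2)\Rightarrow(3)\Rightarrow(1)$. The implication $(1)\Rightarrow(2)$ is immediate, since total minimality trivially implies minimality, so if every subgroup of $G$ is totally minimal then $G$ is \HM.

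For $(3)\Rightarrow(1)$ I would verify directly that in each of the three families every subgroup is totally minimal, via the Total Minimality Criterion (Fact \ref{Crit}(3)). The key observation is that a compact group is totally minimal, since every Hausdorff quotient of a compact group is again compact, hence minimal; therefore the closure $\overline{H}$ of any subgroup $H$ inside the compact completion is automatically totally minimal, and it suffices to check that $H$ is \emph{totally dense} in $\overline{H}$. In family (a) this follows because the closed subgroups of $\Z_p$ form the chain $\{p^n\Z_p\}$. In family (b) the completion is the profinite group $\prod_p F_p$; as the $F_p$ have pairwise coprime orders, every closed subgroup splits as the product of its $p$-primary parts, and one checks that each subgroup meets these factors densely. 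Family (c) is the most delicate verification, since the closed subgroups of $\Z_p\times F_p$ need not be products; here I would describe them as graphs over the $\Z_p$-coordinate and verify total density case by case.

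The substance of the theorem is $(2)\Rightarrow(3)$, the structure theorem. I would first invoke the Prodanov--Stoyanov theorem (\cite{DPS}): a minimal abelian group is precompact, so $G$ is dense and essential in its compact completion $K$ (essentiality by the Minimality Criterion, Fact \ref{Crit}(1)). A point used repeatedly is that $K$ itself need \emph{not} be \HM\ (Example \ref{DHM:notHM} illustrates this for $\Z_p\times F$), so every contradiction must be manufactured \emph{inside a subgroup of $G$} with its subspace topology, never inside $K$. I would then argue in steps: (B) $K$ is totally disconnected; (C) each $p$-primary part $t_p(G)$ is finite; (D) $G$ has torsion-free rank at most $1$, and in the rank-one case all nonzero directions point to a single prime $p$; (E) assembly. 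In step (C) one reduces to the socle $t_p(G)[p]$, an elementary abelian \HM\ group, which must be finite by a diagonal obstruction in the spirit of Lemma \ref{direct:sum:cyclic}, while Prodanov's theorem (Fact \ref{TeoP}) disposes of the remaining torsion-free completion; finiteness of the socle then lifts to finiteness of $t_p(G)$. In step (D) the obstructions of Example \ref{ex:DHM:groups}(2)--(3) and Corollary \ref{cor:product} forbid two independent $\Z_p$-directions or two distinct primes, while Lemma \ref{Zp:times:finite:ab} forces the torsion accompanying a rank-one $\Z_p$-direction to be a finite $p$-group. Assembling: if $G$ is torsion it equals $\bigoplus_p t_p(G)$ with finite primary parts, giving family (b); otherwise $G$ is, up to a finite $p$-group summand, a rank-one subgroup of $\Z_p$, giving families (a) or (c).

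The hard part will be step (B), eliminating a nontrivial connected component of $K$ without local compactness, so that \cite[Proposition 3.9]{firstpaper} does not apply. Here I would set $G_0=G\cap c(K)$, note that it is essential in $c(K)$ and hence infinite, and pass to its closure $C'=\overline{G_0}$, a nontrivial connected compact abelian group. Mapping $C'$ onto a circle (any nontrivial connected compact abelian group has $\T$ as a quotient) and using a socle argument — a minimal dense subgroup of $\T$ must contain $\bigoplus_p\Z(p)$, which admits the non-minimal subgroup $\bigoplus_{p\in S}\Z(p)$ for $S$ infinite and co-infinite — should exhibit a subgroup of $G_0$ that fails to be \HM, contradicting heredity; the torsion-free solenoid case $C'\cong\widehat{\Q}$ is instead handled by the torsion-free obstruction (Example \ref{ex:DHM:groups}(1) together with Corollary \ref{lem:SHM}). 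Making precise the passage from a quotient of $C'$ back to an honest subgroup of $G_0$ is the most technical point of the whole argument, and is where I expect the real difficulty to lie.
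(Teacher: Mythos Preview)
This statement is not proved in the paper. It is Fact~\ref{fac:hmabel}, quoted from \cite{DS} (Dikranjan--Stoyanov, 1981) and stated without argument; it appears only in Section~\ref{sec:open} to motivate the open question that follows. There is therefore no proof in the paper to compare your proposal against.

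Regarding your outline on its own merits: the cycle $(1)\Rightarrow(2)\Rightarrow(3)\Rightarrow(1)$ and the reduction to the precompact completion via the Prodanov--Stoyanov theorem are the right skeleton, and you correctly identify the elimination of connectedness as the delicate point. Two caveats, though. First, several tools you invoke from the present paper (Lemma~\ref{Zp:times:finite:ab}, Corollary~\ref{cor:product}, Corollary~\ref{lem:SHM}, the obstructions in Example~\ref{ex:DHM:groups}) concern \emph{dense} (local) minimality of \emph{compact} groups; to use them you must know that the relevant subgroup of $G$ is dense in the compact object under discussion, which is precisely what is not automatic once you start slicing $G$. Second, the gap you yourself flag in step~(B)---turning a quotient map $C'\twoheadrightarrow\T$ into a non-\HM\ subgroup of $G_0$---is genuine and your sketch does not close it: there is no section of $C'\to\T$, so the socle of $\T$ does not lift to a subgroup of $G$. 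The original argument in \cite{DS} proceeds by a direct character-theoretic analysis of precompact minimal abelian topologies rather than through the obstruction lemmas developed here decades later.
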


\begin{question}
Is it possible to add in Fact \ref{fac:hmabel} also at least one of the following apparently weaker conditions:
\ben
\item [(1$^\ast$)]  $G$ is \DTM ?
\item [(2$^\ast$)]  $G$ is \DM?
\een
\end{question}

Note that if condition $(2^\ast)$ is equivalent to the other conditions in Fact \ref{fac:hmabel}, then also $(1^\ast)$ is, as the implications $(1) \rightarrow (1^\ast) \rightarrow (2^\ast)$ follow from the definitions.

\bigskip

Let $K$ be a Lie group. Recall that by the Hilbert-Smith Conjecture $\Z_p$ cannot act effectively on $K$. This means that if $\alpha:\Z_p\times K \to K$ is a continuous action, then $\ker \alpha$ is non-trivial.

\begin{fact}\label{fact:lie}\cite[Theorem 6]{K}
Every non-discrete Lie group contains a non-trivial continuous homomorphic image of $\R$.
\end{fact}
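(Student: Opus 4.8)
The plan is to produce the required one-parameter subgroup directly from the Lie-theoretic structure of $G$. First I would record that a non-discrete Lie group has positive dimension: a Lie group is a smooth manifold whose identity component $G_0$ is an open connected Lie subgroup, so $\dim G = \dim G_0$; a zero-dimensional manifold is a discrete set of isolated points, hence a zero-dimensional Lie group would be discrete. Since $G$ is assumed non-discrete, we must have $\dim G \geq 1$, and therefore the Lie algebra $\mathfrak{g} = T_e G$ is a non-trivial real vector space.

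Next I would fix any non-zero vector $X \in \mathfrak{g}$ and consider the associated one-parameter subgroup $\gamma_X \colon \R \to G$ given by the exponential map,
\[
\gamma_X(t) = \exp(tX).
\]
By the standard properties of $\exp$ (namely, $\gamma_X$ is the integral curve through $e$ of the left-invariant vector field determined by $X$, so that $\gamma_X(s+t) = \gamma_X(s)\gamma_X(t)$), the map $\gamma_X$ is a smooth, hence continuous, homomorphism from the additive group $\R$ into $G$. Its image $\gamma_X(\R)$ is therefore a subgroup of $G$ and, by construction, a continuous homomorphic image of $\R$. To see that it is non-trivial, I would use the fact that the differential of $\exp$ at the origin of $\mathfrak{g}$ is the identity, so that $\gamma_X'(0) = X \neq 0$; thus $\gamma_X$ is not the constant map $t \mapsto e$, and $\gamma_X(\R) \neq \{e\}$. (Depending on $X$, this image is a copy of $\R$, a circle $\T$, or a dense winding in a torus, but in every case it is non-trivial, which is all that is claimed.)

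In this formulation there is no serious obstacle: the entire content reduces to the existence of the exponential map and the identification $\mathfrak{g} = T_e G$ together with $d(\exp)_0 = \mathrm{id}_{\mathfrak{g}}$, all of which are standard. The one point where real work would be hidden is if ``Lie group'' were taken in the broader sense of a locally compact group without small subgroups: then the production of a non-trivial one-parameter subgroup is no longer immediate, and one would instead have to invoke the Gleason--Yamabe structure theory, which supplies such subgroups and is the genuinely non-trivial input in that setting.
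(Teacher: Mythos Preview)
The paper does not supply its own proof of this statement: it is recorded as a \emph{Fact} and attributed to \cite[Theorem~6]{K} (Kaplansky, \emph{Lie Algebras and Locally Compact Groups}). So there is no in-paper argument to compare your proposal against.

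Your argument is correct for the standard definition of a Lie group as a smooth manifold with compatible group operations: non-discreteness forces $\dim G\ge 1$, hence $\mathfrak g\ne\{0\}$, and for any $X\ne 0$ the one-parameter subgroup $t\mapsto\exp(tX)$ is a non-trivial continuous homomorphic image of $\R$. Your own caveat is the relevant one: Kaplansky's text develops the theory starting from locally compact groups and the no-small-subgroups condition, and in that framework Theorem~6 is the genuinely non-trivial assertion that such a group admits a non-trivial one-parameter subgroup, proved via Gleason--Yamabe-type structure theory rather than the exponential map on a pre-existing smooth structure. For the purposes of this paper, where ``Lie group'' is used in the ordinary sense, your direct exponential argument is perfectly adequate and is exactly what a reader would reconstruct from the citation.
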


By Fact \ref{fac:ext}, a hereditarily locally minimal, locally compact group that is abelian or connected either is a Lie group or has $\Z_p$ as an open subgroup. In \cite{firstpaper} we proved that the non-abelian, totally disconnected group $\Q_p \rtimes \Q_p^*$ is hereditarily locally minimal, while it neither is a Lie group, nor has $\Z_p$ as an open subgroup. So now we consider the case when a group is hereditarily locally minimal, locally compact, not totally disconnected, under the Hilbert-Smith Conjecture.
\begin{proposition}\label{prop:hscon}
	Assuming the Hilbert-Smith Conjecture, let $G$ be a \HLM \ locally compact group.
	If $G$ is not totally disconnected, then every closed abelian subgroup of $G$ is Lie; in particular, $G$ cannot contain a copy of $\Z_p$ for any prime $p$.
\end{proposition}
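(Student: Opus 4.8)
The plan is to reduce the whole statement to the single claim that \emph{$G$ contains no closed subgroup topologically isomorphic to $\Z_p$, for any prime $p$}. Granting this, the proposition follows at once: if $A$ is any closed abelian subgroup of $G$, then $A$ is locally compact (being closed in the locally compact $G$) and \HLM\ (being a subgroup of the \HLM\ group $G$), so Fact \ref{fac:ext} applies to $A$ and forces it to be either a Lie group or to possess an open subgroup isomorphic to some $\Z_q$. The second alternative would produce a closed copy of $\Z_q$ inside $A\subseteq G$, contradicting the claim; hence $A$ is Lie. The displayed ``in particular'' is then just the claim itself, since a copy of $\Z_p$ would be a closed abelian non-Lie subgroup.

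The first substantive step is to pin down the connected core. Since $G$ is not totally disconnected, $L:=c(G)$ is a non-trivial closed connected subgroup; it is locally compact and \HLM, so Fact \ref{fac:ext} applies to the \emph{connected} group $L$. The option that $L$ have an open subgroup isomorphic to $\Z_p$ is impossible, because an open subgroup of a connected group is the whole group, which would force $L\cong\Z_p$ to be both connected and totally disconnected, hence trivial. Therefore $L$ is a non-trivial, and so non-discrete, connected Lie group; moreover $L$ is characteristic, hence normal, in $G$.

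Now I argue the claim by contradiction, assuming a closed $P\cong\Z_p$ inside $G$. As $L$ is normal in $G$, conjugation yields a continuous action $\alpha\colon P\times L\to L$, and this is precisely where the Hilbert--Smith Conjecture enters: $\Z_p$ cannot act effectively on the Lie group $L$, so $\ker\alpha\ne\{e\}$. Being a non-trivial closed subgroup of $P\cong\Z_p$, the kernel is itself isomorphic to $\Z_p$; call it $P'$, and note that $P'$ centralizes $L$ by definition. To finish, I extract an abelian connected piece from $L$: by Fact \ref{fact:lie} the non-discrete Lie group $L$ contains a non-trivial continuous homomorphic image $R$ of $\R$, which is a non-trivial connected abelian subgroup. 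Since $P'$ commutes elementwise with $R\subseteq L$ and both are abelian, the closure $C:=\overline{P'R}$ is a closed abelian subgroup of $G$, and hence is locally compact and \HLM. But $C\supseteq P'\cong\Z_p$, so $C$ is not NSS and cannot be a Lie group, whereas $C\supseteq R$ gives $c(C)\ne\{e\}$, so $C$ cannot have an open subgroup isomorphic to any $\Z_q$ (a group with a totally disconnected open subgroup is itself totally disconnected). This contradicts Fact \ref{fac:ext} applied to $C$, and the claim follows.

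I expect the delicate point to be the non-effective case of the conjugation action. The Hilbert--Smith Conjecture only forbids \emph{effective} actions, so the argument cannot stop at $\alpha$: one must observe that the kernel is forced to be a full copy of $\Z_p$ centralizing $L$, and then convert the abstract commuting pair (a copy of $\Z_p$ together with a connected Lie group) into a \emph{single} closed abelian subgroup that is simultaneously non-Lie and non-totally-disconnected. Fact \ref{fact:lie} is exactly the tool that makes this possible, since it supplies an abelian connected subgroup of the possibly non-abelian $L$ with which $P'$ can be combined to contradict the dichotomy of Fact \ref{fac:ext}.
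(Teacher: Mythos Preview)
Your proof is correct and follows essentially the same approach as the paper: identify $c(G)$ as a non-trivial Lie group, use the Hilbert--Smith Conjecture on the conjugation action to obtain a copy of $\Z_p$ centralizing $c(G)$, extract an abelian one-parameter piece via Fact \ref{fact:lie}, and derive a contradiction from the resulting abelian subgroup. Two minor differences worth noting: the paper first verifies $K\cap c(G)=\{e\}$ so as to realize $c(G)\rtimes K$ inside $G$ and then pass to the direct product $c(G)\times M$, whereas you work directly with the conjugation action and avoid this step; and at the end the paper cites an external result (\cite[Corollary 4.7]{DHXX}) to rule out $N\times\Z_p$ being \HLM, while you close the argument self-containedly by applying Fact \ref{fac:ext} to the closed abelian subgroup $\overline{P'R}$.
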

\begin{proof}
Let $A$ be a closed abelian subgroup of $G$. In view of Fact \ref{fac:ext}, it suffices to prove that $A$ cannot contain a copy of $\Z_p$ for any prime $p$.
By contradiction, let $K\cong \Z_p$ be a subgroup of $A$.
	
As the connected component $c(G)$ of $G$ is non-trivial, it is a non-trivial Lie group by \cite[Theorem 1.10]{DHXX}, and we claim that $H=K\cap c(G)$ is trivial. Otherwise, it would be a closed non-trivial subgroup of $K\cong \Z_p$, so $H\cong \Z_p$ itself, but this is impossible as $H$ is a subgroup of the Lie group $c(G)$.	
	
As $c(G)$ is normal in $G$, the topological semi-direct product $c(G)\rtimes_{\alpha} K$, defined by conjugations in $G$, is (isomorphic to) a subgroup of $G$. By Hilbert-Smith Conjecture, $M=\ker \alpha$ is non-trivial, and being a closed subgroup of $K\cong \Z_p$, $M$  is isomorphic to $\Z_p$ as well.
So also
	\[
	c(G)\rtimes_{\alpha} M = c(G)\times M \cong c(G)\times \Z_p
	\]
is a subgroup of $G$, hence a \HLM \ locally compact group. Since $c(G)$ is a non-trivial connected Lie group, it must contain a non-discrete abelian Lie group $N$, by Fact \ref{fact:lie}.
It follows that $N\times M$ is \HLM, contradicting \cite[Corollary 4.7]{DHXX}.
\end{proof}

\begin{question}\label{q:HSC}
Can we replace in Proposition \ref{prop:hscon} the condition ``\HLM'' with ``\DLM'' obtaining the same conclusion?
\end{question}

\subsection*{Acknowledgments}
The first-named author takes this opportunity to thank Professor Dikranjan for his generous hospitality and support.
The second-named author  is partially supported by  grant PSD-2015-2017-DIMA-PRID-2017-DIKRANJAN PSD-2015-2017-DIMA - progetto PRID TokaDyMA
 of Udine University. The third and fourth-named authors are supported by Programma SIR 2014 by MIUR, project GADYGR, number RBSI14V2LI, cup G22I15000160008 and by INdAM - Istituto Nazionale di Alta Matematica.

\end{document}